\documentclass[11pt,a4paper]{article}

\usepackage{titlesec}
\usepackage{fancyhdr}
\usepackage{a4wide}
\usepackage{graphicx}
\usepackage{float}
\usepackage{amssymb}
\usepackage{amsmath}
\usepackage{amsfonts}
\usepackage{amsthm}
\usepackage{color}
\usepackage{mathrsfs}
\usepackage{array}
\usepackage{eucal}
\usepackage{tikz}
\usepackage[T1]{fontenc}
\usepackage{inputenc}
\usepackage[english]{babel}
\usepackage{lmodern}
\usepackage{hyperref}
\usepackage{geometry}
\usepackage{changepage}
\usepackage{bm}
\geometry{hmargin=2.2cm, vmargin=2.2cm }
\changepage{0pt}{}{}{}{}{0pt}{}{0pt}{6pt}
\usepackage[numbers]{natbib}
\setlength{\bibsep}{0.0pt}

\usepackage{hyperref}
\hypersetup{
pdfpagemode=none,
pdftoolbar=true,        
pdfmenubar=true,        
pdffitwindow=false,     
pdfstartview={Fit},    
pdftitle={Design of a mode converter using thin resonant ligaments},    
pdfauthor={L. Chesnel, J. Heleine, S.A. Nazarov},     
pdfsubject={},  
pdfcreator={L. Chesnel, J. Heleine, S.A. Nazarov},   
pdfproducer={L. Chesnel, J. Heleine, S.A. Nazarov}, 
pdfkeywords={}, 
pdfnewwindow=true,      
colorlinks=true,       
linkcolor=magenta,          
citecolor=red,        
filecolor=cyan,      
urlcolor=blue           
}

\newcommand{\dsp}{\displaystyle}

\newcommand{\eps}{\varepsilon}
\newcommand{\om}{\omega}
\newcommand{\Om}{\Omega}
\newcommand{\mrm}[1]{\mathrm{#1}}

\newcommand{\Cplx}{\mathbb{C}}
\newcommand{\N}{\mathbb{N}}
\newcommand{\R}{\mathbb{R}}

\newtheorem{theorem}{Theorem}[section]
\newtheorem{lemma}[theorem]{Lemma}
\newtheorem{remark}[theorem]{Remark}

\newtheorem{proposition}[theorem]{Proposition}

\begin{document}

~\vspace{0.0cm}
\begin{center}
{\sc \bf\huge Design of a mode converter\\[6pt] using thin resonant ligaments}
\end{center}

\begin{center}
\textsc{Lucas Chesnel}$^1$, \textsc{J\'er\'emy Heleine}$^1$, \textsc{Sergei A. Nazarov}$^{2}$\\[16pt]
\begin{minipage}{0.95\textwidth}
{\small
$^1$ INRIA/Centre de math\'ematiques appliqu\'ees, \'Ecole Polytechnique, Institut Polytechnique de Paris, Route de Saclay, 91128 Palaiseau, France;\\
$^2$ St. Petersburg State University, Universitetskaya naberezhnaya, 7-9, 199034, St. Petersburg, Russia;\\[10pt]
E-mails: \texttt{lucas.chesnel@inria.fr}, \texttt{jeremy.heleine@inria.fr}, \texttt{srgnazarov@yahoo.co.uk} \\[-14pt]
\begin{center}
(\today)
\end{center}
}
\end{minipage}
\end{center}
\vspace{0.4cm}

\noindent\textbf{Abstract.} 
The goal of this work is to design an acoustic mode converter. More precisely, the wave number is chosen so that two modes can propagate. We explain how to construct geometries such that the energy of the modes is completely transmitted and additionally the mode 1 is converted into the mode 2 and conversely. To proceed, we work in a symmetric waveguide made of two branches connected by two thin ligaments whose lengths and positions are carefully tuned. The approach is based on asymptotic analysis for thin ligaments around resonance lengths. We also provide numerical results to illustrate the theory.\\

\noindent\textbf{Key words.} Acoustic waveguide, mode converter, asymptotic analysis, thin ligament, scattering coefficients, complex resonance.

\section{Introduction}\label{Introduction}

The mode converter is a classical device in waves physics which appears for example in optics \cite{HoHu05,CXJD06,KTSM09,LiMF12,OhLe14} or in the field of microwaves \cite{Micro2,Micro1}. In this article, we consider a rather academic but universal problem of propagation of acoustic waves in a waveguide which is bounded in the transverse direction. This problem also arises in electromagnetism and in water-wave theory in certain configurations. We work in time-harmonic regime and the wavenumber is set so that two modes, say modes 1 and 2, can propagate in the structure. When the mode 1 or the mode 2 is incoming in the structure, in general it produces a reflection on the modes 1, 2 and a transmission on the modes 1, 2. The latter are characterized by some complex reflection and transmission coefficients (see (\ref{Field1}), (\ref{Field2}), (\ref{DefScaMat}) for precise definitions).
The goal of this work is to construct a waveguide, that is a geometry, which exhibits two main features at the given wavenumber. First, we want the reflection coefficients to be null so that the energy of any incident field is completely transmitted. Secondly, we wish the energy of the incident mode 1 to be transmitted only on the mode 2 and vice versa to guarantee the mode conversion.\\
\newline
The main difficulty of the problem lies in the fact that the dependence of the scattering coefficients with respect to the geometry is not explicit and not linear. In order to address it, techniques of optimization have been considered. We refer the reader in particular to \cite{KuTW88,LuKG98,LAHI00,LDOHG19,LGHDO19,Lebb19}. However the functionals involved in the analysis are non convex and unsatisfying local minima exist. Moreover, these methods do not allow the user to control the main features of the shape compare to the approach we propose below.\\
\newline
To construct mode converters, we will work in waveguides as in Figure \ref{DomainOriginal} made of two truncated channels connected by thin ligaments of width $\eps>0$. Let us mention that we used a similar approach in \cite{ChNa20Distrib} to design energy distributors, that is geometries with three channels where in monomode regime, the energy of an incoming wave is almost completely transmitted and where additionally one can control the ratio of energy transmitted in the two other channels. The present article reuses some analysis presented in \cite{ChNa20Distrib} and can be seen as a second part of \cite{ChNa20Distrib}. In general, due to the geometrical properties of waveguides as in Figure \ref{DomainOriginal}, almost no energy passes through the ligaments and it does not seem relevant to exploit them to get almost complete transmission. However working around the resonance lengths of the ligaments (see (\ref{ConditionResonance})), it has been shown that this can be achieved. This has been studied for example in \cite{Krieg,BoTr10,LiZh17,LiZh18,LiSZ19} in the context of the scattering of an incident wave by a periodic array of subwavelength slits. The core of our approach is based on an asymptotic expansion of the scattering solutions with respect to $\eps$ as $\eps$ tends to zero. This will allow us to derive relatively explicit formula relating the scattering properties to the geometrical features. To obtain the expansions, we will apply techniques of matched asymptotic expansions. For related methods, we refer the reader to \cite{Beal73,Gady93,KoMM94,Naza96,Gady05,Naza05,JoTo06,BaNa15,BoCN18}. We emphasize that an important feature of our study distinguishing it from the previous references is that the lengths, and not only the widths, of the ligaments depend on $\eps$ (see (\ref{DefL})). This way of considering the problem, which we used also in \cite{NaCh21a,NaCh21b,ChNa20Distrib}, is an essential ingredient to reveal the resonant phenomena. From this perspective, our work shares similarities with \cite{HoSc19,BrHS20,BrSc20} (see also references therein).\\
\newline
The outline is as follows. In the next section, we describe the setting. We work in waveguides which are symmetric with respect to the vertical axis and we explain how to split the problem into two half-waveguide problems with Neumann or Dirichlet boundary condition at the end of the truncated ligaments. In Section \ref{SectionAux}, we introduce auxiliary objects that will be used in the asymptotic analysis. Sections \ref{SectionAsymptoNeumann} and \ref{SectionAsymptoDirichlet} form the principal part of the article. There we compute asymptotic expansions of the scattering solutions for the problems with respectively Neumann and Dirichlet boundary condition at the end of the truncated ligaments as $\eps$ tends to zero. Then we exploit the results in Section \ref{AnalysisResults} to provide examples of geometries acting as mode converters. Then we illustrate the theory in Section \ref{SectionNumerics} with numerical experiments before discussing possible extensions and open questions in Section \ref{sectionConclu}. Finally, we give the proof of two technical lemmas needed in the study in a short appendix. The main result of this article appears in Proposition \ref{MainPropo}.

\section{Setting}

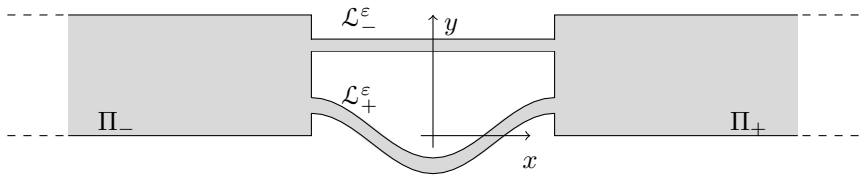
\begin{figure}[!ht]
\centering
\begin{tikzpicture}[scale=1.6]
\draw[fill=gray!30](-0.2,0.7) rectangle (2,0.8);
\draw[fill=gray!30,draw=none](-2,0) rectangle (0,1);
\draw[fill=gray!30,draw=none](2,0) rectangle (4,1);
\draw (-2,0)--(0,0)--(0,1)--(-2,1);
\draw (4,0)--(2,0)--(2,1)--(4,1);
\draw[dashed] (-2.5,0)--(-2,0);
\draw[dashed] (-2.5,1)--(-2,1);
\draw[dashed] (4.5,0)--(4,0);
\draw[dashed] (4.5,1)--(4,1);
\draw[fill=gray!30,draw=none](-0.2,0.705) rectangle (2.1,0.795);
\draw[domain=-0.1:2.1,line width=2mm,gray!30]plot(\x, { 0.25*cos(\x*180)}  );
\draw[domain=0:2]plot(\x, { 0.065+0.25*cos(\x*180)}  );
\draw[domain=0:2]plot(\x, { -0.065+0.25*cos(\x*180)}  );
\begin{scope}[shift={(0.9,-0.2)}]
\draw[->] (0,0.2)--(0.9,0.2);
\draw[->] (0.1,0.1)--(0.1,1.2);
\node at (0.9,-0.02){\small $x$};
\node at (0.25,1.1){\small $y$};
\end{scope}
\node at (-1.6,0.1){\small $\Pi_-$};
\node at (3.6,0.1){\small $\Pi_+$};
\node at (0.4,0.95){\small $\mathcal{L}^{\eps}_-$};
\node at (0.4,0.3){\small $\mathcal{L}^{\eps}_+$};
\end{tikzpicture}
\caption{Geometry of the waveguide $\Om^\eps$ with two thin ligaments. \label{DomainOriginal}} 
\end{figure}

Define the domains $\Pi_{\pm}:=\{z=(x,y)\in\R^2\,|\,(\pm x,y)\in(1/2;+\infty)\times(0;1)\}$. Let us connect them by two thin non intersecting ligaments $\mathcal{L}^{\eps}_{-}$, $\mathcal{L}^{\eps}_{+}$ of constant width $\eps>0$ whose features will be made more precise below. Define the waveguide $\Om^{\eps}$ 
\begin{equation}\label{DomainFirstDef}
\Om^{\eps}=\Pi_-\cup \mathcal{L}^{\eps}_{-} \cup \mathcal{L}^{\eps}_{+} \cup \Pi_+
\end{equation}
(see Figure \ref{DomainOriginal}). We assume that $\Om^{\eps}$ is connected and that its boundary $\partial\Om^{\eps}$ is Lipschitz. Interpreting the domain $\Om^{\eps}$ as an acoustic waveguide, we are led to consider the following problem with Neumann Boundary Conditions (BC)
\begin{equation}\label{MainPb}
 \begin{array}{|rcll}
 \Delta u^\eps +\omega^2  u^\eps&=&0&\mbox{ in }\Omega^\eps\\
 \partial_\nu u^\eps &=&0 &\mbox{ on }\partial\Omega^\eps.
\end{array}
\end{equation}
Here, $\Delta$ is the Laplace operator while $\partial_\nu$ corresponds to the derivative along the exterior normal. Furthermore, $u^\eps$ is the acoustic pressure of the medium while $\omega>0$ is the wave number. We fix $\om\in(\pi;2\pi)$ so that only the two  modes
\[
\mrm{w}^\pm_1(x,y)=\cfrac{1}{\sqrt{\beta_1}}\,e^{\pm i\beta_1x}\qquad\mbox{ and }\qquad \mrm{w}^\pm_2(x,y)=\cfrac{1}{\sqrt{\beta_2}}\,e^{\pm i\beta_2x}\sqrt{2}\cos(\pi y)
\]
with $\beta_1:=\om$, $\beta_2:=\sqrt{\om^2-\pi^2}$ can propagate. Note that the normalization factors are chosen to have some simple relations of conservation of energy (see (\ref{RelConsNRJ}) and (\ref{RelConsNRJDemi})). We are interested in the solutions to the diffraction problem \eqref{MainPb} generated by the incoming waves $\mrm{w}^+_1$, $\mrm{w}^+_2$ in the channel $\Pi_-$. These solutions admit the decompositions
\begin{equation}\label{Field1}
 u^\eps_1(x,y)=\begin{array}{|ll}
\mrm{w}^+_1(x+1/2,y)+r^{\eps}_{11}\mrm{w}^-_1(x+1/2,y)+r^{\eps}_{12}\mrm{w}^-_2(x+1/2,y)+\dots &\quad\mbox{ in }\Pi_-\\[3pt]
\phantom{\mrm{w}^+_1(x+1/2,y)+\ \,} t^{\eps}_{11}\mrm{w}^+_1(x-1/2,y)+t^{\eps}_{12}\mrm{w}^+_2(x-1/2,y)+\dots &\quad\mbox{ in }\Pi_+\end{array}
\end{equation}
\begin{equation}\label{Field2}
 u^\eps_2(x,y)=\begin{array}{|ll}
\mrm{w}^+_2(x+1/2,y)+r^{\eps}_{21}\mrm{w}^-_1(x+1/2,y)+r^{\eps}_{22}\mrm{w}^-_2(x+1/2,y)+\dots &\quad\mbox{ in }\Pi_-\\[3pt]
\phantom{\mrm{w}^+_2(x+1/2,y)+\ \,} t^{\eps}_{21}\mrm{w}^+_1(x-1/2,y)+t^{\eps}_{22}\mrm{w}^+_2(x-1/2,y)+\dots &\quad\mbox{ in }\Pi_+\end{array}
\end{equation}
where the $r^{\eps}_{ij}\in\mathbb{C}$ are reflection coefficients and $t^{\eps}_{ij}\in\mathbb{C}$ are transmission coefficients. Here the ellipsis stand for remainders which decay at infinity with the rate $e^{-(4\pi^2-\om^2)^{1/2}|x|}$ in $\Pi_\pm$. Due to conservation of energy, for $i=1,2$, one can verify that there holds
\begin{equation}\label{RelConsNRJ}
|r^{\eps}_{i1}|^2+|r^{\eps}_{i2}|^2+|t^{\eps}_{i1}|^2+|t^{\eps}_{i2}|^2=1.
\end{equation}
We define the reflection and transmission matrices
\begin{equation}\label{DefScaMat}
R^{\eps}:=\left(\begin{array}{cc}
r^{\eps}_{11} & r^{\eps}_{12} \\[6pt]
r^{\eps}_{21} & r^{\eps}_{22} 
\end{array}\right)\qquad\mbox{ and }\qquad T^{\eps}:=\left(\begin{array}{cc}
t^{\eps}_{11} & t^{\eps}_{12} \\[6pt]
t^{\eps}_{21} & t^{\eps}_{22} 
\end{array}\right).
\end{equation}
In general, due to the geometrical features of waveguides such as the one depicted in Figure \ref{DomainOriginal}, almost no energy of the incident waves $\mrm{w}^+_1$, $\mrm{w}^+_2$ passes through the thin ligaments $\mathcal{L}^{\eps}_{\pm}$ and one observes almost complete reflection. More precisely, one finds that as $\eps$ tends to zero, there holds 
\begin{equation}\label{DefExpanNonCri}
 R^\eps  =\left(\begin{array}{cc}
1 & 0 \\[1pt]
0 & 1 
\end{array}\right)+o(1),\qquad\qquad T^\eps  =\left(\begin{array}{cc}
0 & 0 \\[1pt]
0 & 0 
\end{array}\right)+o(1)
\end{equation}
(see the numerics in Figure \ref{FieldNotTuned}). The main goal of this work is to show that by choosing carefully the lengths of the thin ligaments as well as their positions, the energy of the waves $\mrm{w}^+_1$, $\mrm{w}^+_2$ can be almost completely transmitted. Moreover we can obtain mode conversion, that is we can ensure that all the energy carried by the incident mode 1 be transferred on the mode 2 and vice versa. In terms of scattering matrices, we will establish that by choosing carefully the properties of the ligaments, as $\eps$ tends to zero, we can have
\begin{equation}\label{ConditionModalConverter}
 R^\eps  =\left(\begin{array}{cc}
0 & 0 \\[1pt]
0 & 0 
\end{array}\right)+o(1),\qquad\qquad T^\eps  =\left(\begin{array}{cc}
0 & 1 \\[1pt]
1 & 0 
\end{array}\right)+o(1).
\end{equation}
To diminish the number of parameters to play with, we work with waveguides which are symmetric with respect to the $(Oy)$ axis. In other words, we assume that $\Om^{\eps}=\{(x,y)\,|(-x,y)\in\Om^{\eps}\}$ and we set $\om^{\eps}:=\{(x,y)\,\in\Om^{\eps}\,|\,x<0\}$ (see Figure \ref{DomainOriginalHalf}). The solutions of the initial problem (\ref{MainPb}) in $\Om^{\eps}$ can be expressed by means of solutions of two problems set in $\om^{\eps}$ that we present now.

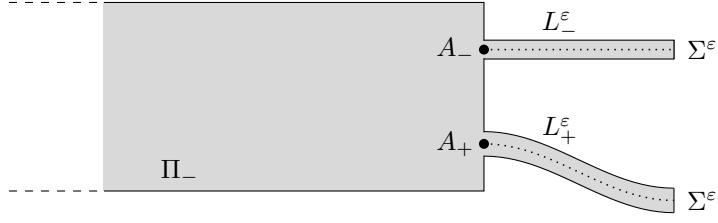
\begin{figure}[!ht]
\centering
\begin{tikzpicture}[scale=2.5]
\draw[fill=gray!30](-0.2,0.7) rectangle (1,0.8);
\draw[fill=gray!30,draw=none](-2,0) rectangle (0,1);
\draw (-2,0)--(0,0)--(0,1)--(-2,1);
\draw[dashed] (-2.5,0)--(-2,0);
\draw[dashed] (-2.5,1)--(-2,1);
\draw[fill=gray!30,draw=none](-0.2,0.705) rectangle (1,0.795);
\draw[domain=-0.1:1,line width=3.1mm,gray!30]plot(\x, {0.1+ 0.15*cos(\x*180)});
\draw[domain=0:1,line width=0.2mm,dotted]plot(\x, {0.1+ 0.15*cos(\x*180)});
\draw[domain=0:1,line width=0.2mm,dotted]plot(\x, {0.75});
\draw[domain=0:1]plot(\x, { 0.065+0.1+0.15*cos(\x*180)});
\draw[domain=0:1]plot(\x, { -0.065+0.1+0.15*cos(\x*180)});
\draw (1,0.7)--(1,0.8);
\draw (1,-0.115)--(1,0.015);
\node at (-1.6,0.1){\small $\Pi_-$};
\node at (0.4,0.88){\small $L^{\eps}_-$};
\node at (0.4,0.32){\small $L^{\eps}_+$};
\node at (-0.15,0.75){\small $A_-$};
\node at (-0.15,0.25){\small $A_+$};
\node at (1.15,0.75){\small $\Sigma^{\eps}$};
\node at (1.15,-0.05){\small $\Sigma^{\eps}$};
\fill (0,0.25) circle (0.75pt);
\fill (0,0.75) circle (0.75pt);
\end{tikzpicture}
\caption{Geometry of the waveguide $\om^\eps$ with two truncated ligaments. \label{DomainOriginalHalf}} 
\end{figure}

\noindent Introduce the problem with Neumann Artificial Boundary Condition (ABC) at the end of the two truncated ligaments 
\begin{equation}\label{MainPbN}
\begin{array}{|rcll}
 \Delta u^\eps_N +\omega^2  u^\eps_N&=&0&\mbox{ in }\omega^\eps\\
 \partial_\nu u^\eps_N &=&0 &\mbox{ on }\partial\omega^\eps
\end{array}
\end{equation}
as well as the one with Dirichlet ABC 
\begin{equation}\label{MainPbD}
\begin{array}{|rcll}
 \Delta u^\eps_D +\omega^2  u^\eps_D&=&0&\mbox{ in }\omega^\eps\\
 \partial_\nu u^\eps_D &=&0 &\mbox{ on }\partial\omega^\eps\cap \partial\Omega^\eps\\[3pt]
 u^\eps_D &=&0 &\mbox{ on }\Sigma^{\eps}:=\partial\omega^\eps\setminus \partial\Omega^\eps.
\end{array}
\end{equation}
These problems admit the solutions
\begin{eqnarray}
u^\eps_{N1}(x,y)&=&\mrm{w}^+_1(x+1/2,y)+r^{\eps N}_{11}\mrm{w}^-_1(x+1/2,y)+r^{\eps N}_{12}\mrm{w}^-_2(x+1/2,y)+\dots \mbox{ in }\om^{\eps}\label{FieldN1}\\[3pt]
u^\eps_{N2}(x,y)&=&\mrm{w}^+_2(x+1/2,y)+r^{\eps N}_{21}\mrm{w}^-_1(x+1/2,y)+r^{\eps N}_{22}\mrm{w}^-_2(x+1/2,y)+\dots \mbox{ in }\om^{\eps}\label{FieldN2}
\end{eqnarray}
and
\begin{eqnarray}
u^\eps_{D1}(x,y)&=&\mrm{w}^+_1(x+1/2,y)+r^{\eps D}_{11}\mrm{w}^-_1(x+1/2,y)+r^{\eps D}_{12}\mrm{w}^-_2(x+1/2,y)+\dots \mbox{ in }\om^{\eps}\label{FieldD1}\\[4pt]
u^\eps_{D2}(x,y)&=&\mrm{w}^+_2(x+1/2,y)+r^{\eps D}_{21}\mrm{w}^-_1(x+1/2,y)+r^{\eps D}_{22}\mrm{w}^-_2(x+1/2,y)+\dots \mbox{ in }\om^{\eps}.\label{FieldD2}
\end{eqnarray}
Again, the ellipsis stand for remainders which decay at infinity with the rate $e^{-(4\pi^2-\om^2)^{1/2}|x|}$. With the reflection coefficients $r^{\eps N}_{ij}$, $r^{\eps D}_{ij}\in\Cplx$, we can form the scattering matrices of the two problems
\[
R^{\eps}_N:=\left(\begin{array}{cc}
r^{\eps N}_{11} & r^{\eps N}_{12} \\[6pt]
r^{\eps N}_{21} & r^{\eps N}_{22} 
\end{array}\right)\qquad\mbox{ and }\qquad 
R^{\eps}_D:=\left(\begin{array}{cc}
r^{\eps D}_{11} & r^{\eps D}_{12} \\[6pt]
r^{\eps D}_{21} & r^{\eps D}_{22} 
\end{array}\right).
\]
The scattering coefficients satisfy the relations of conservation of energy,
for $i=1,2$, 
\begin{equation}\label{RelConsNRJDemi}
|r^{\eps N}_{i1}|^2+|r^{\eps N}_{i2}|^2=1,\qquad\qquad |r^{\eps D}_{i1}|^2+|r^{\eps D}_{i2}|^2=1.
\end{equation}

\begin{lemma}\label{lemmaDecomposition}
We have the identities
\begin{equation}\label{RelationSymmetryIdentity}
R^{\eps}=\cfrac{R^{\eps}_N+R^{\eps}_D}{2}\qquad\mbox{ and }\qquad T^{\eps}=\cfrac{R^{\eps}_N-R^{\eps}_D}{2}\,.
\end{equation}
\end{lemma}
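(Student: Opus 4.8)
The plan is to exploit the reflection symmetry $S\colon(x,y)\mapsto(-x,y)$ of $\Om^\eps$ and to build the full scattering solutions $u^\eps_1,u^\eps_2$ out of the half-waveguide solutions $u^\eps_{Ni},u^\eps_{Di}$, reading off the coefficients at the end. First I would extend the half-waveguide fields to $\Om^\eps$ by reflection across the symmetry segment $\Sigma^\eps\subset\{x=0\}$: for the Neumann field set $U_{Ni}(x,y)=u^\eps_{Ni}(x,y)$ if $x<0$ and $U_{Ni}(x,y)=u^\eps_{Ni}(-x,y)$ if $x>0$ (even extension), and for the Dirichlet field set $U_{Di}(x,y)=u^\eps_{Di}(x,y)$ if $x<0$ and $U_{Di}(x,y)=-u^\eps_{Di}(-x,y)$ if $x>0$ (odd extension). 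Both extensions solve the Helmholtz equation away from $\Sigma^\eps$ and satisfy the Neumann condition on $\partial\Om^\eps$; the only point to verify is $C^1$-matching across $\Sigma^\eps$. The even extension is continuous and its $x$-derivative matches precisely because $\partial_\nu u^\eps_{Ni}=\partial_x u^\eps_{Ni}=0$ on $\Sigma^\eps$, while the odd extension is continuous because $u^\eps_{Di}=0$ on $\Sigma^\eps$ and its $x$-derivative matches automatically. Hence $U_{Ni},U_{Di}$ are genuine $\mH^1_{\loc}$ solutions of (\ref{MainPb}) in $\Om^\eps$.

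Next I would track the propagating modes under reflection. From the explicit formulas one has $\mrm{w}^\pm_j(-s,y)=\mrm{w}^\mp_j(s,y)$ for $j=1,2$; taking $s=x-1/2$ converts the $\Pi_-$-expansion of $u^\eps_{Ni},u^\eps_{Di}$ (written in the variable $x+1/2$) into the $\Pi_+$-expansion of $U_{Ni},U_{Di}$ (written in $x-1/2$). Concretely, for $i=1$ the even extension reads $U_{N1}=\mrm{w}^-_1(x-1/2,y)+r^{\eps N}_{11}\mrm{w}^+_1(x-1/2,y)+r^{\eps N}_{12}\mrm{w}^+_2(x-1/2,y)+\dots$ in $\Pi_+$, and the odd one is the same expression with an overall minus sign and $r^{\eps D}_{1j}$ in place of $r^{\eps N}_{1j}$. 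I would then form the half-sum $\tfrac12(U_{Ni}+U_{Di})$: in $\Pi_-$ the incident term $\mrm{w}^+_i$ survives with coefficient $1$ and the reflected part becomes $\tfrac12(r^{\eps N}_{ij}+r^{\eps D}_{ij})\,\mrm{w}^-_j$, while in $\Pi_+$ the incoming waves $\mrm{w}^-_j$ (travelling towards the scatterer) cancel between the even and odd parts, leaving only the outgoing contribution $\tfrac12(r^{\eps N}_{ij}-r^{\eps D}_{ij})\,\mrm{w}^+_j$.

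Therefore $\tfrac12(U_{Ni}+U_{Di})$ is a solution of (\ref{MainPb}) whose only incoming wave is $\mrm{w}^+_i$ in $\Pi_-$, with a remainder that decays exponentially (being a combination of the exponentially decaying remainders of the two half-problems). This is exactly the data defining $u^\eps_i$ in (\ref{Field1})--(\ref{Field2}). Invoking uniqueness of the scattering solution with prescribed incident field, I conclude $u^\eps_i=\tfrac12(U_{Ni}+U_{Di})$, and matching coefficients gives $r^\eps_{ij}=\tfrac12(r^{\eps N}_{ij}+r^{\eps D}_{ij})$ and $t^\eps_{ij}=\tfrac12(r^{\eps N}_{ij}-r^{\eps D}_{ij})$ for $i,j\in\{1,2\}$, which is (\ref{RelationSymmetryIdentity}).

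The main obstacle is not any single computation but the careful bookkeeping in the two reflection steps: reflection relabels $\mrm{w}^+_j\leftrightarrow\mrm{w}^-_j$, the argument shift $x\pm1/2$ must be handled consistently, and the opposite sign conventions for the even and odd extensions are exactly what produce the $+$ and $-$ combinations. One must also make sure the underlying well-posedness holds, i.e. that a solution with no incoming propagating wave and exponentially decaying remainder vanishes (absence of trapped modes), since this is what legitimizes identifying the half-sum with the canonical scattering solution $u^\eps_i$.
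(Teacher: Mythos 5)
Your construction is exactly the one in the paper's proof: extend $u^\eps_{Ni}$ evenly and $u^\eps_{Di}$ oddly across $\Sigma^\eps$, check that the extensions solve (\ref{MainPb}), and use $\mrm{w}^\pm_j(-s,y)=\mrm{w}^\mp_j(s,y)$ to see that in the half-sum the incoming waves in $\Pi_+$ cancel while the coefficients $(r^{\eps N}_{ij}\pm r^{\eps D}_{ij})/2$ appear in front of the outgoing waves. The gap is in your concluding step, where you identify $u^\eps_i$ with $\tfrac12(U_{Ni}+U_{Di})$ by invoking ``uniqueness of the scattering solution with prescribed incident field''. That is precisely the absence of trapped modes, which you flag yourself but do not prove, and which cannot be taken for granted: problem (\ref{MainPb}) in a waveguide with thin resonant ligaments may admit nontrivial solutions decaying exponentially at infinity, and the paper nowhere assumes their absence (the whole point of the construction is to work at, and perturb around, resonances of the ligaments). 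If such a trapped mode exists, the identification of the two functions fails and your argument, as written, does not go through.

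The repair is the energy argument the paper actually uses, and it needs no uniqueness. Set $e_i:=u^\eps_i-\tfrac12(U_{Ni}+U_{Di})$; by your own bookkeeping, $e_i$ solves (\ref{MainPb}) and carries no incoming propagating wave, its propagating part being outgoing with coefficients $r^\eps_{ij}-(r^{\eps N}_{ij}+r^{\eps D}_{ij})/2$ in $\Pi_-$ and $t^\eps_{ij}-(r^{\eps N}_{ij}-r^{\eps D}_{ij})/2$ in $\Pi_+$. Multiplying $\Delta e_i+\om^2e_i=0$ by $\overline{e_i}$, integrating by parts over $\Om^\eps_L:=\{(x,y)\in\Om^\eps\,:\,|x|<L\}$ and taking the imaginary part as $L\to+\infty$, only the propagating terms contribute and one gets
\[
\sum_{j=1}^2\Big|r^{\eps}_{ij}-\frac{r^{\eps N}_{ij}+r^{\eps D}_{ij}}{2}\Big|^2+\sum_{j=1}^2\Big|t^{\eps}_{ij}-\frac{r^{\eps N}_{ij}-r^{\eps D}_{ij}}{2}\Big|^2=0,
\]
which is (\ref{RelationSymmetryIdentity}) regardless of whether $e_i$ vanishes: $e_i$ may well be a trapped mode, but that does not affect the coefficients. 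The same argument, applied to the difference of two solutions with the same incident wave, is also what makes the coefficients $r^\eps_{ij}$, $t^\eps_{ij}$ well defined in the first place, so nothing beyond this energy identity is needed.
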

\begin{proof}
For $i=1,2$, define the function $e_i$ such that $e_i=u^{\eps}_i-(u^{\eps}_{Ni}+u^{\eps}_{Di})/2$ in $\om^\eps$ and $e_i(x,y)=u^{\eps}_i(x,y)-(u^{\eps}_{Ni}(-x,y)-u^{\eps}_{Di}(-x,y))/2$ in $\Om^\eps\setminus\overline{\om^{\eps}}$. Due to the boundary conditions satisfied by $u^{\eps}_{Ni}$ and $u^{\eps}_{Di}$, one can check that $e_i$ solves the problem (\ref{MainPb}). Then multiplying the equation $ \Delta e_i +\omega^2  e_i=0$ by $\overline{e_i}$, integrating by parts in $\Om^\eps_L:=\{(x,y)\in\Om^\eps\,|\,|x|<L\}$ and taking the imaginary part as $L\to+\infty$, we get, for $i=1,2$,
\[
\sum_{j=1}^2 \left|r^{\eps}_{ij}-\cfrac{r^{\eps N}_{ij}+r^{\eps D}_{ij}}{2}\right|^2+\sum_{j=1}^2 \left|t^{\eps}_{ij}-\cfrac{r^{\eps N}_{ij}-r^{\eps D}_{ij}}{2}\right|^2=0.
\]
This gives (\ref{RelationSymmetryIdentity}).
\end{proof}
\noindent From (\ref{RelationSymmetryIdentity}), we deduce that to obtain the relations (\ref{ConditionModalConverter}) characterising the mode conversion, we have to find geometries $\om^\eps$ where, when $\eps$ tends to zero,
\begin{equation}\label{RelationSymmetry}
R^{\eps}_N=\left(\begin{array}{cc}
0 & 1 \\[6pt]
1 & 0 
\end{array}\right)+o(1)\qquad\mbox{ and }\qquad R^{\eps}_D=\left(\begin{array}{cc}
0 & -1 \\[6pt]
-1 & 0 
\end{array}\right)+o(1).
\end{equation}
In order to get these particular values for $R^{\eps}_N$, $R^{\eps}_D$, as already mentioned, we will tuned precisely the parameters defining the thin ligaments. Let us describe in more details the geometry (see Figure \ref{DomainOriginalHalf}). Pick two different numbers $y_\pm\in(0;1)$ and set 
\[
A_{\pm}:=(-1/2,y_\pm).
\]
Let $\mathscr{L}_{\pm}^{\eps}$ be a simple 1D smooth curve connecting $A_{\pm}$ to some $B_{\pm}\in(Oy)$ (the position of the $B_{\pm}$ has no influence in the study). We assume that the curves $\mathscr{L}_{-}^{\eps}$ and $\mathscr{L}_{+}^{\eps}$ do not intersect, that the tangents to $\mathscr{L}_{\pm}^{\eps}$ at $A_{\pm}$, $B_{\pm}$ are parallel to the $(Ox)$ axis and that $\mathscr{L}_{\pm}^{\eps}$ is of length 
\begin{equation}\label{DefL}
\ell^\eps_{\pm}:=\ell_{\pm}+\eps \ell^\prime_{\pm}
\end{equation}
(similar to what has been done in \cite{NaCh21a,NaCh21b}). Here the values $\ell_{\pm}>0$, $\ell^\prime_\pm>0$ will be fixed later on to observe interesting phenomena. In a neighbourhood of $\mathscr{L}_{\pm}^{\eps}$, we introduce the local curvilinear coordinates $(n_{\pm},s_{\pm})$ where $s_{\pm}\in[0;\ell_{\pm}^{\eps}]$ is the arc length (with $s_{\pm}=0$ at $A_{\pm}$) and $n_{\pm}$ is the oriented distance to $\mathscr{L}_{\pm}^{\eps}$. Finally, we define the thin ligaments 
\[
L^{\eps}_{\pm}:=\{z\,|\,s_{\pm}\in[0;\ell_{\pm}^{\eps}),\,-\eps/2 <n_{\pm}<\eps/2\}.
\]
The initial ligaments $\mathcal{L}^{\eps}_{\pm}$ introduced in (\ref{DomainFirstDef}) are then defined by symmetrization of $L^{\eps}_{\pm}$ with respect to the $(Oy)$ axis. In the sequel, we will compute an asymptotic expansion of the functions $u^\eps_{Ni}$, $u^\eps_{Di}$ appearing in (\ref{FieldN1})--(\ref{FieldD2}) as $\eps$ tends to zero. This will give us an expansion of the matrices $R^{\eps}_N$, $R^{\eps}_D$. To proceed, first we introduce some auxiliary objects which will be useful in the analysis.

\section{Auxiliary objects} \label{SectionAux}

$\star$ Considering the limit $\eps\rightarrow0^+$ in the equation \eqref{MainPbN} restricted to the thin ligaments $L^{\eps}_{\pm}$, we are led to study the one-dimensional Helmholtz problem with mixed BC
\begin{equation}\label{Pb1D N}
\begin{array}{|ll}
\partial^2_{s}v+\omega^2v=0\qquad\mbox{ in }\mathscr{L}_{\pm}:=\{z\,|\,s_{\pm}\in(0;\ell_{\pm}),\,n_{\pm}=0\}\\[3pt]
v(0)=\partial_sv(\ell_{\pm})=0.
\end{array}
\end{equation}
Note that the condition $v(0)=0$ is imposed artificially. Eigenvalues and eigenfunctions (up to a multiplicative constant) of the problem (\ref{Pb1D N}) are given by
\[
(\pi (m+1/2)/\ell_{\pm})^2,\qquad\quad v(s_{\pm})=\sin(\pi(m+1/2) s_{\pm}/\ell_{\pm})\qquad\quad\mbox{ with }m\in\N:=\{0,1,2,3,\dots\}.
\]
$\star$ On the other hand, taking the limit $\eps\rightarrow0^+$ in the equation \eqref{MainPbD} restricted to $L^{\eps}_{\pm}$, we obtain the problem with Dirichlet BC
\begin{equation}\label{Pb1D D}
\begin{array}{|ll}
\partial^2_{s}v+\omega^2v=0\qquad\mbox{ in }\mathscr{L}_{\pm}\\[3pt]
v(0)= v(\ell_{\pm})=0.
\end{array}
\end{equation}
Eigenvalues and eigenfunctions (up to a multiplicative constant) of the problem \eqref{Pb1D D} are given by
\[
(\pi m/\ell_{\pm})^2,\qquad\quad v(s_{\pm})=\sin(\pi m s_{\pm}/\ell_{\pm})\qquad\quad\mbox{ with }m\in\N^{\ast}:=\{1,2,3,\dots\}.
\]
Importantly, in the sequel we shall choose $\ell_{-}$, $\ell_{+}$ such that at the limit $\eps\to0$, the ligament 
$L^{\eps}_{-}$ is resonant for the problem (\ref{Pb1D N}) while $L^{\eps}_{+}$ is resonant for the problem (\ref{Pb1D D}). In other words, we select $\ell_{-}$, $\ell_{+}$ such that 
\begin{equation}\label{ConditionResonance}
\om\,\ell_{-}=\pi(m_-+1/2) \qquad\mbox{ and }\qquad \om\,\ell_{+}=\pi m_+ 
\end{equation}
for some $m_-\in\N$, $m_+\in\N^{\ast}$. Let us emphasize that the limit problems (\ref{Pb1D N}), (\ref{Pb1D D}) are set in the fixed curves $\mathscr{L}_{\pm}$. But the true lengths $\ell^\eps_{\pm}=\ell_{\pm}+\eps \ell'_{\pm}$ of the ligaments $L^{\eps}_{\pm}$ (and not only their widths) depend on the parameter $\eps$. This is an essential element in the analysis to bring to light the resonant phenomena (see in the same spirit \cite{NaCh21a,NaCh21b}).\\

\begin{figure}[!ht]
\centering
\begin{tikzpicture}[scale=1.3,rotate=-90]
\begin{scope}
\clip (-1.8,2) -- (1.8,2) -- (1.8,0) -- (-1.8,0) -- cycle;
\draw[fill=gray!30,draw=none](0,2) circle (1.8);
\end{scope}
\draw[fill=gray!30,draw=none](-0.5,2) rectangle (0.5,3.5);
\draw (-1.8,2)--(-0.5,2)--(-0.5,3.5);
\draw (1.8,2)--(0.5,2)--(0.5,3.5);
\draw[dashed] (-0.5,3.5)--(-0.5,4);
\draw[dashed] (0.5,3.5)--(0.5,4);
\draw[dashed] (2.1,2)--(1.8,2);
\draw[dashed] (-2.1,2)--(-1.8,2);
\draw[thick,draw=gray!30](-0.495,2)--(0.495,2);
\node[mark size=1pt,color=black] at (0,2) {\pgfuseplotmark{*}};
\node[color=black] at (0,1.8) {\small $O$};
\begin{scope}[shift={(1.9,1)}]
\draw[->] (0,0)--(-0.5,0);
\draw[->] (0,0)--(0,0.5);
\node at (-0.66,0){\small$\xi_y$};
\node at (0,0.66){\small$\xi_x$};
\end{scope}
\node at (0,1){\small$\Xi^-$};
\node at (0,2.75){\small$\Xi^+$};
\node at (-1.5,2.2){\small$\Xi$}; 
\draw[dotted] (-0.5,2)--(0.5,2);
\end{tikzpicture}
\caption{Geometry of the inner field domain $\Xi$.\label{FrozenGeom}} 
\end{figure}
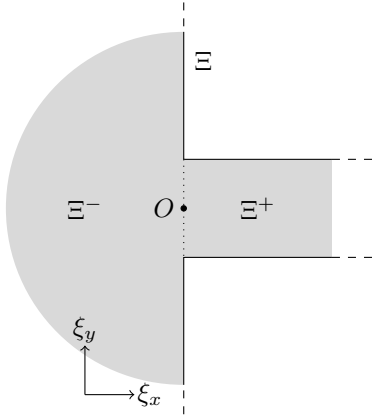

\noindent $\star$ Now we present a third problem which is involved in the construction of asymptotics and which will be used to describe the boundary layer phenomenon near the junction points $A_{\pm}$. To capture rapid variations of the field in the vicinity of $A_{\pm}$, we introduce the stretched coordinates $\xi^\pm=(\xi^\pm_x,\xi^\pm_y)=\eps^{-1}(z-A_\pm)=(\eps^{-1}(x+1/2),\,\eps^{-1}(y-y_{\pm}))$. Observing that 
\begin{equation}\label{Strechted1}
(\Delta_z+\om^2)u^{\eps}(\eps^{-1}(z-A_\pm))=\eps^{-2}\Delta_{\xi^{\pm}}u^{\eps}(\xi^{\pm})+\dots,
\end{equation}
we are led to consider the Neumann problem 
\begin{equation}\label{PbBoundaryLayer}
-\Delta_\xi Y=0\qquad\mbox{ in }\Xi,\quad\qquad \partial_\nu Y=0\quad\mbox{ on }\partial\Xi.
\end{equation}
Here $\Xi:=\Xi^-\cup\Xi^+\subset\mathbb{R}^2$ (see Figure \ref{FrozenGeom}) is the union of the half-plane $\Xi^-$ and the semi-strip $\Xi^+$ such that
\[
\Xi^-:={\mathbb R}^2_-=\{\xi=(\xi_x,\xi_y):\,\xi_x<0\},\qquad\qquad
\Xi^+:=\{\xi:\,\xi_x\geq 0, |\xi_y|<1/2\}.
\]
In the method of matched asymptotic expansions (see
the monographs \cite{VD,Ilin}, \cite[Chpt. 2]{MaNaPl} and others) that we will use, we will work with solutions of \eqref{PbBoundaryLayer} which are bounded or which have polynomial growth in the semi-strip as $\xi_x\rightarrow+\infty$ as well as logarithmic growth in the half plane as $|\xi|\rightarrow+\infty$. One of such solutions is evident and is given by $Y^0=1$.
Another solution, which is linearly independent with $Y^0$, is the unique function satisfying (\ref{PbBoundaryLayer}) and which has the representation
\begin{equation}\label{PolyGrowth}
Y^1(\xi)=\left\{
\begin{array}{ll}
\xi_x+C_\Xi+O(e^{-\pi \xi_x})& \mbox{ as }\xi_x\rightarrow+\infty,\quad \xi
\in \Xi^+\\[5pt]
\dsp\frac{1}{\pi}\ln \frac{1}{|\xi|}+O
 \Big(\frac{1}{|\xi|}\Big)& \mbox{ as }|\xi|\rightarrow+\infty,\quad \xi
\in \Xi^-.
\end{array}\right.
\end{equation}
Here, $C_\Xi$ is a universal constant whose value can be computed using conformal mapping, see for example \cite{Schn17}. Note that the coefficients in front of the growing terms in \eqref{PolyGrowth} are related due to the fact that a harmonic function has zero total flux at infinity. For the existence of $Y^1$ and the uniqueness of its definition, we refer the reader for example to \cite[Lemma 4.1]{BoCN18}.

\section{Asymptotic analysis for the problem with Neumann ABC}\label{SectionAsymptoNeumann}

In this section, we compute an asymptotic expansion of the fields $u^{\eps}_{N1}$, $u^{\eps}_{N2}$ appearing in (\ref{FieldN1}), (\ref{FieldN2}), as $\eps$ tends to zero. The final results are summarized in (\ref{AsymptoFinalResults1}), (\ref{AsymptoFinalResults2}). 

\subsection{Scattering of the first mode}

First, we focus our attention on the analysis for the field $u^{\eps}_{N1}$. To shorten notation, we remove the index ${}_{N1}$.\\
\newline
In the channel, we work with the ansatz
\begin{equation}\label{AnsatzWaveguides}
u^\eps=u^0+\eps u^\prime +\dots \quad\mbox{\rm in }\Pi_-,
\end{equation}
while in the thin ligaments, we consider the expansion 
\begin{equation}\label{AnsatzLigaments}
u^\eps(x,y)=\eps^{-1}v^{-1}_{\pm}(s_{\pm})+v^0_{\pm}(s_{\pm})+\dots\quad\mbox{\rm in }L^\eps_{\pm}.
\end{equation}
Here the ellipsis stand for higher order terms which are not important in our analysis. Taking the formal limit $\eps\to0^+$, we find that $v^{-1}_{\pm}$ must solve the homogeneous problem (\ref{Pb1D N}). Note in particular that the condition $v^{-1}_{\pm}(0)=0$ comes from the fact that the expansion (\ref{AnsatzWaveguides}) of $u^{\eps}$ in $\Pi_-$ remains bounded as $\eps$ tends to zero. Under the assumption (\ref{ConditionResonance}) for the lengths $\ell_{\pm}$, we must take $v^{-1}_{+}=0$ (we recall that $\mathscr{L}_+$ is not resonant for the problem (\ref{Pb1D N})) and $v^{-1}_{-}$ of the form 
\[
v^{-1}_{-}(s_-)=a {\bf v}(s_-)\qquad\mbox{ with }\qquad a\in\mathbb{C},\ {\bf v}(s)=\sin(\omega s).
\]  
Let us stress that the value of $a$ is unknown and will be fixed during the construction of the asymptotics of $u^\eps$. 
At $A_-$, the Taylor formula gives
\begin{equation}\label{DefInnerA}
\eps^{-1}v^{-1}_{-}(s_-)+v^0_{-}(s_-)=0+(C^{A_-} \xi^-_x  +v^0_{-}(0))+\dots\qquad\mbox{ with }\qquad C^{A_-}:=a\partial_s {\bf v}(0)=a\omega.
\end{equation}
Here $\xi^-_x=\eps^{-1}(x+1/2)$ is the stretched variable introduced just before (\ref{Strechted1}).\\
\newline
We look for an inner expansion of $u^{\eps}$ in the vicinity of $A_{-}$ of the form
\[
u^\eps(x)=C^{A_-}\,Y^1(\xi^-)+ c^{A_-}+\dots
\]
where $Y^1$ is introduced in (\ref{PolyGrowth}), $C^{A_-}$ is defined in (\ref{DefInnerA}) and $c^{A_-}$ is a constant to determine.\\
\newline 
Let us continue the matching procedure. Taking the limit $\eps\to0^+$, we find that the main term $u^0$ in (\ref{AnsatzWaveguides}) must solve the problem
\[
\Delta u^0 +\omega^2u^0=0\ \mbox{ in }\Pi_-,\qquad 
\partial_\nu u^0=0\mbox{ on }\partial\Pi_-\setminus \{A_-\},
\]
with the expansion 
\begin{equation}\label{DecompoZero}
u^0(x,y)=\mrm{w}^+_1(x+1/2,y)+r^0_1\,\mrm{w}^-_1(x+1/2,y)+r^0_2\,\mrm{w}^-_2(x+1/2,y)+\tilde{u}^0(x,y).
\end{equation}
Here $r^0_1,\,r^0_2\in\Cplx$ and $\tilde{u}^0$ decay exponentially at infinity. The coefficients $r^0_1,\,r^0_2$ will provide the first terms in the asymptotics of $r^{\eps N}_{11}$, $r^{\eps N}_{12}$, which by removing the indices $N$ and $1$, simply writes
\[
r^{\eps}_1=r^{0}_1+\dots\qquad\mbox{ and }\qquad r^{\eps}_2=r^{0}_2+\dots\,.
\]
Matching the behaviours of the inner and outer expansions of $u^{\eps}$ in $\Pi_-$, we find that at the point $A_{-}$, the function $u^0$ must expand as 
\[
u^0(x,y)= C^{A_-}\frac{1}{\pi}\ln \frac{1}{r^{A_-}}+U^0+O(r^{A_-})
\qquad \mbox{ as }r^{A_-}:=((x+1/2)^2+(y-y_-)^2)^{1/2}\rightarrow0^+,
\]
where $U^0$ is a constant. Observe that $u^0$ is singular at $A_{-}$. For $i=1,2$, define the function $W_i$ such that
\[
W_i(x,y)=\mrm{w}^+_i(x+1/2,y)+\mrm{w}^-_i(x+1/2,y).
\]
Note that we have $\Delta W_i +\omega^2W_i=0$ in $\Pi_-$ and $\partial_\nu W_i=0$ on $\partial\Pi_-$. Integrating by parts in 
\[
0=\int_{\Pi_-^\rho}(\Delta u^0 +\omega^2u^0)W_i-u^0\,(\Delta W_i +\omega^2 W_i)\,dz,
\]
with $\Pi_-^\rho:=\{(x,y)\in\Pi_-\,,x>-\rho\mbox{ and }r^A_{-}>1/\rho\}$, and taking the limit $\rho\to+\infty$, we get 
\begin{equation}\label{Part1System}
\begin{array}{|l}
2i\sqrt{\beta_1}(r^{0}_1-1)+2C^{A_-}=0\\[4pt]
2i\sqrt{\beta_2}r^{0}_2+2\cos(\pi y_-)\sqrt{2}C^{A_-}=0.
\end{array}
\end{equation}
From the expression of $C^{A_-}$ (see (\ref{DefInnerA})), this gives
\begin{equation}\label{equation1}
\begin{array}{|l}
r_{1}^{0}=1+ia\sqrt{\beta_1}\\[4pt]
r_{2}^{0}=ia\cos(\pi y_-)\sqrt{2}\beta_1/\sqrt{\beta_2}.
\end{array}
\end{equation}
Then matching the constant behaviour between the outer expansion and the inner  expansion inside $\Pi_-$, we get
\[
U^0=C^{A_-}\,\pi^{-1}\ln\eps+c^{A_-}=-C^{A_-}\,\pi^{-1}|\ln\eps|+c^{A_-}.
\]
This sets the value of $c^{A_-}$. However $U^0$ depends on $a$ and we have to explicit this dependence. For $u^0$, we have the decomposition
\begin{equation}\label{SecondDecompo}
u^0(x,y)=\mrm{w}^+_1(x+1/2,y)+\mrm{w}^-_1(x+1/2,y)+C^{A_-}\gamma_-
\end{equation}
where $\gamma_{-}$ are the outgoing functions such that
\begin{equation}\label{DefGamma}
\begin{array}{|rcll}
\Delta \gamma_{-}+\om^2\gamma_{-}&=&0&\mbox{ in }\Pi_-\\
\partial_\nu\gamma_{-}&=&\delta_{A_{-}}&\mbox{ on }\partial\Pi_-.
\end{array}
\end{equation}
Here $\delta_{A_{-}}$ stands for the Dirac delta function at $A_{-}$. Denote by $\Gamma_{-}$ the constant behaviour of $\gamma_{-}$ at $A_{-}$, that is the constant such that $\gamma_{-}$ behaves as 
\begin{equation}\label{DefGammaM}
\gamma_{-}(x,y)= \frac{1}{\pi}\ln \frac{1}{r^{A_-}}+\Gamma_{-}+O(r^{A_-})\qquad \mbox{ when }r^{A_-}\rightarrow0^+.
\end{equation}
Then from (\ref{SecondDecompo}), we derive
\[
U^0=\cfrac{2}{\sqrt{\beta_1}}+a\om \Gamma_-.
\]
Matching the constant behaviour at $A_{-}$ inside the thin ligament $L^{\eps}_-$, we obtain
\begin{equation}\label{BoundaryCondition1}
\begin{array}{lcl}
v_{-}^0(0)&=&C^{A_-}\,C_{\Xi}+c^{A_-} = U^0+C^{A_-}\,(\pi^{-1}|\ln\eps|+C_{\Xi})\\[5pt]
 &=& \cfrac{2}{\sqrt{\beta_1}}+a\om\,(\pi^{-1}|\ln\eps|+C_{\Xi}+\Gamma_-).
\end{array}
\end{equation}
Writing the compatibility condition so that the problem (\ref{Pb1D N}), supplemented with the condition (\ref{BoundaryCondition1}) instead of $v^0_-(0)=0$, admits a solution, we get
\begin{equation}\label{CompatibilityCondition}
v_{-}^0\partial_s\textbf{v}|_0-v_{-}^0\partial_s\textbf{v}|_{\ell_{-}}-(\textbf{v}\partial_s v_{-}^0|_0-\textbf{v}\partial_s v_{-}^0|_{\ell_{-}})=0.
\end{equation}
Since $\textbf{v}(0)=\partial_s\textbf{v}(\ell_{-})=0$, we obtain
\[
\om v_{-}^0(0)+(-1)^{m_{-}}\partial_sv_{-}^0(\ell_-)=0.
\]
On the other hand, from $\partial_\nu(\eps^{-1}a {\bf v})(\ell^\eps_-)+v^0_-(\ell^\eps_-)+\dots =0$, we infer that $\partial_s v^0_-(\ell_-)=\om^2 a\ell'_-\sin(\om \ell_-)=(-1)^{m_-}\om^2 a\ell'_-$. Thus we get
\begin{equation}\label{Part2System}
2/\sqrt{\beta_1}+a\om\,(\pi^{-1}|\ln\eps|+C_{\Xi}+\Gamma_-+\ell'_-)=0.
\end{equation}
Below, see Lemmas \ref{LemmaCReal} and \ref{lemmaRelConstants}, we prove that $C_{\Xi}\in\R$ and $\Im m\,(\om\Gamma_{-})=1+2\beta_1\cos(\pi y_{-})^2/\beta_2$. Thus we have
\[
a(\eta_-+i(1+2\beta_1\cos(\pi y_-)^2/\beta_2))=-2/\sqrt{\beta_1}
\]
with $\eta_-:=\om(\pi^{-1}|\ln\eps|+C_{\Xi}+\Re e\,\Gamma_-+\ell'_-)$. \\
\newline
Gathering (\ref{Part1System}) and (\ref{Part2System}), we obtain the system
\begin{equation}\label{system1}
\begin{array}{|l}
r_{1}^{0}=1+ia\sqrt{\beta_1}\\[4pt]
r_{2}^{0}=ia\cos(\pi y_-)\sqrt{2}\beta_1/\sqrt{\beta_2}\\[4pt]
a(\eta_-+i(1+2\beta_1\cos(\pi y_-)^2/\beta_2))=-2/\sqrt{\beta_1}.
\end{array}
\end{equation}
In particular, when we choose $\ell'_-$ such that $\eta_-=0$, that is for $\ell'_-=-(\pi^{-1}|\ln\eps|+C_{\Xi}+\Re e\,\Gamma_-)$, we have
\begin{equation}\label{DefParama}
a=\cfrac{2i}{\sqrt{\beta_1}\,(1+2\beta_1\cos(\pi y_-)^2/\beta_2)}
\end{equation}
and so 
\[
r_{1}^{0}=1-\cfrac{2}{1+2\beta_1\cos(\pi y_-)^2/\beta_2}=\cfrac{2\beta_1\cos(\pi y_-)^2/\beta_2-1}{2\beta_1\cos(\pi y_-)^2/\beta_2+1},\qquad\quad  r_{2}^{0}=-\cfrac{2\cos(\pi y_-)\sqrt{2\beta_1/\beta_2}}{2\beta_1\cos(\pi y_-)^2/\beta_2+1}\,.
\]
This ends the asymptotic analysis of $u^{\eps}_{N1}$, $r_{11}^{N\eps}$, $r_{12}^{N\eps}$ as $\eps$ tends to zero. Let summarize the results that we obtained. Assume that
\begin{equation}\label{DefRegLong}
\begin{array}{l}\ell_-^{\eps}= \pi(m_-+1/2)/\om-\eps\ell'_-\\
\phantom{\ell_-^{\eps}}=\pi(m_-+1/2)/\om-\eps(\pi^{-1}|\ln\eps|+C_{\Xi}+\Re e\,\Gamma_-)\end{array} \begin{array}{l}\qquad\mbox{ and }\qquad\ell_+^{\eps}= \ell_{+}+o(1),\\
\phantom{bidule}
\end{array}
\end{equation}
where $m_-\in\N$ and  $\ell_{+}>0$ is such that $\ell_{+}\notin\{\pi(m_-+1/2)/\om,\,m\in \N\}$. Then when $\eps$ tends to zero, we have the following expansions
\begin{equation}\label{AsymptoFinalResults1}
\fbox{$\begin{array}{l}
u^{\eps}_{N1}(x,y)=\mrm{w}^+_1(x+1/2,y)+\mrm{w}^-_1(x+1/2,y)+a\om\gamma_-(x,y)+o(1) \mbox{ in }\Pi_-,\\[6pt]
u^{\eps}_{N1}(x,y)=\eps^{-1}a\sin(\om s)+O(1) \mbox{ in }L^\eps_-,\quad u^{\eps}_{N1}(x,y)=O(1) \mbox{ in }L^\eps_+,\quad \mbox{ $a$ given by (\ref{DefParama}),}\\[6pt]
r_{11}^{N\eps}=\cfrac{2\beta_1\cos(\pi y_-)^2/\beta_2-1}{2\beta_1\cos(\pi y_-)^2/\beta_2+1}+o(1),\qquad r_{12}^{N\eps}=-\cfrac{2\cos(\pi y_-)\sqrt{2\beta_1/\beta_2}}{2\beta_1\cos(\pi y_-)^2/\beta_2+1}+o(1)\,.
\end{array}$}
\end{equation}
Here $\gamma_-$ is the function introduced in (\ref{DefGamma}). Let us formulate two comments on these results. First, we emphasize that the influence of the non resonant ligament $L^\eps_+$ appears only at the next order in $\eps$. This is an important point which allows us to decouple the two problems (\ref{MainPbN}) and (\ref{MainPbD}) (at least at first order in $\eps$). Second, we observe that the first terms $r_{11}^{N0}$, $r_{12}^{N0}$ in the asymptotic of 
$r_{11}^{N\eps}$, $r_{12}^{N\eps}$ are such that $|r_{11}^{N0}|^2+|r_{12}^{N0}|^2=1$. This is coherent with the identity of conservation of energy (\ref{RelConsNRJDemi}). 
\subsection{Scattering of the second mode}
The asymptotic analysis for $u^{\eps}_{N2}$ is very similar. We simply underline the main differences. First, (\ref{DecompoZero}) becomes
\begin{equation}\label{DecompoZeroBis}
u^0(x,y)=\mrm{w}^+_2(x+1/2,y)+r^0_1\,\mrm{w}^-_1(x+1/2,y)+r^0_2\,\mrm{w}^-_2(x+1/2,y)+\tilde{u}^0(x,y)
\end{equation}
so that (\ref{Part1System}) now writes 
\[
\begin{array}{|l}
2i\sqrt{\beta_1}r^{0}_1+2C^{A_-}=0\\[4pt]
2i\sqrt{\beta_2}(r^{0}_2-1)+2\cos(\pi y_-)\sqrt{2}C^{A_-}=0
\end{array}\qquad\Leftrightarrow\qquad\begin{array}{|l}
r_{1}^{0}=ia\sqrt{\beta_1}\\[4pt]
r_{2}^{0}=1+ia\cos(\pi y_-)\sqrt{2}\beta_1/\sqrt{\beta_2}.
\end{array}
\]
On the other hand, decomposition (\ref{SecondDecompo}) is now of the form
\begin{equation}\label{SecondDecompoBis}
u^0(x,y)=\mrm{w}^+_2(x+1/2,y)+\mrm{w}^-_2(x+1/2,y)+C^{A_-}\gamma_-,
\end{equation}
which implies 
\[
U^0=\cfrac{2\sqrt{2}\cos(\pi y_-)}{\sqrt{\beta_2}}+a\om \Gamma_-.
\]
Then (\ref{BoundaryCondition1}) becomes 
\[
v_{-}^0(0)= \cfrac{2\sqrt{2}\cos(\pi y_-)}{\sqrt{\beta_2}}+a\om\,(\cfrac{|\ln\eps|}{\pi}+C_{\Xi}+\Gamma_-).
\]
Writing the compatibility condition as in (\ref{CompatibilityCondition}), finally we arrive at the system
\[
\begin{array}{|l}
r_{1}^{0}=ia\sqrt{\beta_1}\\[4pt]
r_{2}^{0}=1+ia\cos(\pi y_-)\sqrt{2}\beta_1/\sqrt{\beta_2}\\[4pt]
a(\eta_-+i(1+2\beta_1\cos(\pi y_-)^2/\beta_2))=-2\sqrt{2}\cos(\pi y_-)/\sqrt{\beta_2}
\end{array}
\]
with again $\eta_-=\om(\pi^{-1}|\ln\eps|+C_{\Xi}+\Re e\,\Gamma_-+\ell'_-)$. For $\ell'_-$ chosen such that $\eta_-=0$, we find
\begin{equation}\label{DefParamaBis}
a=\cfrac{2\sqrt{2}i\cos(\pi y_-)}{\sqrt{\beta_2}\,(1+2\beta_1\cos(\pi y_-)^2/\beta_2)}
\end{equation}
and so 
\[
r_{1}^{0}=-\cfrac{2\cos(\pi y_-)\sqrt{2\beta_1/\beta_2}}{1+2\beta_1\cos(\pi y_-)^2/\beta_2},\qquad\quad r_{2}^{0}=1-\cfrac{4\beta_1\cos(\pi y_-)^2/\beta_2}{1+2\beta_1\cos(\pi y_-)^2/\beta_2}=\cfrac{1-2\beta_1\cos(\pi y_-)^2/\beta_2}{1+2\beta_1\cos(\pi y_-)^2/\beta_2}\,.
\]
Let us summarize the results for the asymptotic expansions of $u^{\eps}_{N2}$, $r_{21}^{N\eps}$, $r_{22}^{N\eps}$. Assume that $\ell_-^{\eps}$, $\ell_+^{\eps}$ are as in (\ref{DefRegLong}). Then when $\eps$ tends to zero, we have 
\begin{equation}\label{AsymptoFinalResults2}
\fbox{$\begin{array}{l}
u^{\eps}_{N2}(x,y)=\mrm{w}^+_2(x+1/2,y)+\mrm{w}^-_2(x+1/2,y)+a\om\gamma_-(x,y)+o(1) \mbox{ in }\Pi_-,\\[6pt]
u^{\eps}_{N2}(x,y)=\eps^{-1}a\sin(\om s)+O(1) \mbox{ in }L^\eps_-,\quad 
u^{\eps}_{N2}(x,y)=O(1) \mbox{ in }L^\eps_+,\quad \mbox{ $a$ given by (\ref{DefParamaBis}),}\\[6pt]
r_{21}^{N\eps}=-\cfrac{2\cos(\pi y_-)\sqrt{2\beta_1/\beta_2}}{1+2\beta_1\cos(\pi y_-)^2/\beta_2}+o(1),\qquad r_{22}^{N\eps}=\cfrac{1-2\beta_1\cos(\pi y_-)^2/\beta_2}{1+2\beta_1\cos(\pi y_-)^2/\beta_2}+o(1)\,.
\end{array}$}
\end{equation}
Here $\gamma_-$ is the function introduced in (\ref{DefGamma}). Again the influence of the non resonant ligament $L^\eps_+$ appears only at the next order in $\eps$ and the first terms $r_{21}^{N0}$, $r_{22}^{N0}$ in the asymptotic of 
$r_{21}^{N\eps}$, $r_{22}^{N\eps}$ are such that $|r_{21}^{N0}|^2+|r_{22}^{N0}|^2=1$. Observe also that for $y_-=1/2$, that is for a ligament $L^\eps_-$ located at the mid line of the strip $\Pi_-$, we have $a=0$ and the amplitude of the field does not blow up in $L^\eps_-$ as $\eps$ tends to zero. In this case, the resonance is not excited and $r_{21}^{N\eps}=o(1)$, $r_{22}^{N\eps}=1+o(1)$ which is what we expect.

\section{Asymptotic analysis for the problem with Dirichlet ABC}\label{SectionAsymptoDirichlet}

In this section, we turn our attention to the asymptotic analysis of the fields $u^{\eps}_{D1}$, $u^{\eps}_{D2}$ appearing in (\ref{FieldD1}), (\ref{FieldD2}), as $\eps$ tends to zero. The final results are summarized in (\ref{AsymptoFinalResults1D}), (\ref{AsymptoFinalResults2D}). \\
\newline
The approach is exactly the same as the one followed in the previous section. We simply underline the main differences. We start with the study for the field $u^{\eps}_{D1}$. We consider the same ansatz as in (\ref{AnsatzWaveguides}), (\ref{AnsatzLigaments}). Taking the formal limit $\eps\to0^+$, this time we find that $v^{-1}_{\pm}$ must solve the homogeneous problem (\ref{Pb1D D}). Under the assumption (\ref{ConditionResonance}) for the lengths $\ell_{\pm}$, we must take $v^{-1}_{-}=0$ (because $\mathscr{L}_-$ is not resonant for the problem (\ref{Pb1D D})) and $v^{-1}_{+}$ of the form 
\[
v^{-1}_{+}(s_+)=a {\bf v}(s_+)\qquad\mbox{ with }\qquad a\in\mathbb{C},\ {\bf v}(s)=\sin(\omega s).
\]  
Then the analysis is the same as previously with $A_-$ replaced by $A_+$. In particular, instead of working with the function $\gamma_-$ in (\ref{DefGamma}), we need to introduce $\gamma_{+}$ the outgoing function such that
\begin{equation}\label{DefGammaDirichlet}
\begin{array}{|rcll}
\Delta \gamma_{+}+\om^2\gamma_{+}&=&0&\mbox{ in }\Pi_-\\
\partial_\nu\gamma_{+}&=&\delta_{A_{+}}&\mbox{ on }\partial\Pi_-
\end{array}
\end{equation}
where $\delta_{A_{+}}$ stands for the Dirac delta function at $A_{+}$. Denote also $\Gamma_{+}$ the constant behaviour of $\gamma_{+}$ at $A_{+}$, that is the constant such that 
\begin{equation}\label{DefGammaP}
\gamma_{+}(x,y)= \frac{1}{\pi}\ln \frac{1}{r^{A_+}}+\Gamma_{+}+O(r^{A_+})\qquad \mbox{ when }r^{A_+}:=((x+1/2)^2+(y-y_+)^2)^{1/2}\rightarrow0^+.
\end{equation}
Now in the compatibility condition (\ref{CompatibilityCondition}), since $\textbf{v}(0)=\textbf{v}(\ell_{+})=0$, we obtain
\[
v_{+}^0(0)-(-1)^{m_{+}}v_{-}^0(\ell_+)=0.
\]
On the other hand, from $(\eps^{-1}a {\bf v}+v^0_++\dots)(\ell^\eps_+)=0$, we infer that $v^0_+(\ell_+)=-\om a\ell'_+\cos(\om \ell_+)=-(-1)^{m_+}\om  a\ell'_+$. Thus we get
\begin{equation}\label{Part2SystemDirichlet}
2/\sqrt{\beta_1}+a\om\,(\pi^{-1}|\ln\eps|+C_{\Xi}+\Gamma_++\ell'_+)=0,
\end{equation}
which leads to a system similar to (\ref{system1}). Assume that 
\begin{equation}\label{DefRegLongD}
\begin{array}{l}\ell_-^{\eps}= \ell_{-}+o(1),\qquad\mbox{ and }\\
\phantom{bidule}
\end{array}\qquad\ 
\begin{array}{l}\ell_+^{\eps}= \pi m_+/\om-\eps\ell'_+\\
\phantom{\ell_+^{\eps}}=\pi m_+/\om-\eps(\pi^{-1}|\ln\eps|+C_{\Xi}+\Re e\,\Gamma_+)\end{array} 
\end{equation}
where $\ell_{-}>0$ is such that $\ell_{-}\notin\{\pi m/\om,\,m\in \N^{\ast}\}$ and $m_+\in\N^{\ast}$. Then when $\eps$ tends to zero, we have 
\begin{equation}\label{AsymptoFinalResults1D}
\fbox{$\begin{array}{l}
u^{\eps}_{D1}(x,y)=\mrm{w}^+_1(x+1/2,y)+\mrm{w}^-_1(x+1/2,y)+a\om\gamma_+(x,y)+o(1) \mbox{ in }\Pi_-,\\[6pt]
u^{\eps}_{D1}(x,y)=O(1) \mbox{ in }L^\eps_-,\qquad u^{\eps}_{D1}(x,y)=\eps^{-1}a\sin(\om s)+O(1) \mbox{ in }L^\eps_+,\\[6pt]
\mbox{with }a=\cfrac{2i}{\sqrt{\beta_1}\,(1+2\beta_1\cos(\pi y_+)^2/\beta_2)},\mbox{ and  }\\[16pt]
r_{11}^{D\eps}=\cfrac{2\beta_1\cos(\pi y_+)^2/\beta_2-1}{2\beta_1\cos(\pi y_+)^2/\beta_2+1}+o(1),\qquad r_{12}^{D\eps}=-\cfrac{2\cos(\pi y_+)\sqrt{2\beta_1/\beta_2}}{2\beta_1\cos(\pi y_+)^2/\beta_2+1}+o(1).
\end{array}$}
\end{equation}
Working analogously for $u^{\eps}_{D2}$, for $\ell_-^{\eps}$, $\ell_+^{\eps}$ as in (\ref{DefRegLongD}), when $\eps$ tends to zero, we get
\begin{equation}\label{AsymptoFinalResults2D}
\fbox{$\begin{array}{l}
u^{\eps}_{D2}(x,y)=\mrm{w}^+_2(x+1/2,y)+\mrm{w}^-_2(x+1/2,y)+a\om\gamma_+(x,y)+o(1) \mbox{ in }\Pi_-,\\[6pt]
u^{\eps}_{D2}(x,y)=O(1) \mbox{ in }L^\eps_-,\qquad u^{\eps}_{D2}(x,y)=\eps^{-1}a\sin(\om s)+O(1) \mbox{ in }L^\eps_+,\\[6pt]
\mbox{with }a=\cfrac{2\sqrt{2}i\cos(\pi y_+)}{\sqrt{\beta_2}\,(1+2\beta_1\cos(\pi y_+)^2/\beta_2)},\mbox{ and  }\\[16pt]
r_{21}^{D\eps}=-\cfrac{2\cos(\pi y_+)\sqrt{2\beta_1/\beta_2}}{1+2\beta_1\cos(\pi y_+)^2/\beta_2}+o(1),\qquad r_{22}^{D\eps}=\cfrac{1-2\beta_1\cos(\pi y_+)^2/\beta_2}{1+2\beta_1\cos(\pi y_+)^2/\beta_2}+o(1).
\end{array}$}
\end{equation}
In (\ref{AsymptoFinalResults1D}), (\ref{AsymptoFinalResults2D}), the function $\gamma_+$ is the one introduced in (\ref{DefGammaDirichlet}). This time, it is the influence of the ligament $L^\eps_-$ which is negligible for these problems. Let us mention also that, as expected, we have the relations $|r_{i1}^{D0}|^2+|r_{i2}^{D0}|^2=1$ for the first terms $r_{i1}^{D0}$, $r_{i2}^{D0}$ in the asymptotic of $r_{i1}^{D\eps}$, $r_{i2}^{D\eps}$. 

\section{Analysis of the results}\label{AnalysisResults}

In this section, we gather the results of the previous steps to exhibit a waveguide acting as a mode converter. First, from the results (\ref{AsymptoFinalResults1}), (\ref{AsymptoFinalResults2}), we observe that by picking the parameter $y_-\in(0;1)$ such that
\begin{equation}\label{RelationPositionSlitN1}
\dsp 2\beta_1\cos(\pi y_-)^2/\beta_2-1=0 \qquad\Leftrightarrow\qquad  \cos(\pi y_-)^2=\cfrac{\beta_2}{2\beta_1}=\cfrac{\sqrt{\om^2-\pi^2}}{2\om}\,,
\end{equation}
when $\eps$ tends to zero, for $\ell_-^{\eps}$, $\ell_+^{\eps}$ as in (\ref{DefRegLong}), we have 
\[
R^{\eps}_N=\left(\begin{array}{cc}
0 & -\mrm{sign}(\cos(\pi y_-)) \\[6pt]
-\mrm{sign}(\cos(\pi y_-)) & 0 
\end{array}\right)+o(1).
\]
Note that for $\om\in(\pi;2\pi)$, the equations (\ref{RelationPositionSlitN1}) admit exactly two different solutions in $(0;1)$. And there is a unique $y_-\in(0;1)$ such that additionally there holds 
\begin{equation}\label{RelationPositionSlitN2}
-\mrm{sign}(\cos(\pi y_-))=1.
\end{equation}
In this case, we have the desired asymptotics (\ref{RelationSymmetry}) for $R^{\eps}_N$ as $\eps$ tends to zero.
\begin{remark}\label{Remark1}
The equation (\ref{RelationPositionSlitN1}) has been obtained by imposing $
r_{11}^{N0}=0$. It is quite fortunate that we can find a position of the ligament $L^{\eps}_-$ to achieve this. But it is even more fortunate that we can do it while imposing in the same time $r_{12}^{N0}=1$. Calculus we did that we do not present here indicate that this is very specific to the geometry $\Pi_-=(-\infty;1/2)\times(0;1)$ and does not hold in general for other domains. 
\end{remark}
\noindent Second, from the results (\ref{AsymptoFinalResults1D}), (\ref{AsymptoFinalResults2D}), we note that by picking the parameter $y_+\in(0;1)$ such that
\begin{equation}\label{RelationPositionSlitD1}
\dsp 2\beta_1\cos(\pi y_+)^2/\beta_2-1=0 \qquad\Leftrightarrow\qquad  \cos(\pi y_+)^2=\cfrac{\beta_2}{2\beta_1}=\cfrac{\sqrt{\om^2-\pi^2}}{2\om}\,,
\end{equation}
when $\eps$ tends to zero, for $\ell_-^{\eps}$, $\ell_+^{\eps}$ as in (\ref{DefRegLongD}), we have 
\[
R^{\eps}_N=\left(\begin{array}{cc}
0 & -\mrm{sign}(\cos(\pi y_+)) \\[6pt]
-\mrm{sign}(\cos(\pi y_+)) & 0 
\end{array}\right)+o(1).
\]
For $\om\in(\pi;2\pi)$, the equations (\ref{RelationPositionSlitD1}) admit exactly two different solutions in $(0;1)$. And there is a unique $y_+\in(0;1)$ such that additionally there holds 
\begin{equation}\label{RelationPositionSlitD2}
\mrm{sign}(\cos(\pi y_+))=1.
\end{equation}
Then with this choice, we obtain the desired asymptotics (\ref{RelationSymmetry}) for $R^{\eps}_D$ as $\eps$ tends to zero. Observe in particular that $y_-$ and $y_+$ are located symmetrically with respect to the line $y=1/2$. 
\begin{remark}\label{Remark2}
Again the possibility of imposing both $r_{11}^{D0}=0$ and $r_{12}^{D0}=-1$ is a small miracle which does not happen for generic $\Pi_-$.
\end{remark}
\noindent Finally from Lemma \ref{lemmaDecomposition}, we can state the following proposition, the main result of the article.
\begin{proposition}\label{MainPropo}
Assume that  
\begin{equation}\label{ParamTuned}
\begin{array}{l}
y_-\mbox{ solves }(\ref{RelationPositionSlitN1})-(\ref{RelationPositionSlitN2});\qquad\qquad \ell_-^{\eps}=\pi (m_-+1/2)/\om-\eps(\pi^{-1}|\ln\eps|+C_{\Xi}+\Re e\,\Gamma_-);\\[2pt]
y_+\mbox{ solves }(\ref{RelationPositionSlitD1})-(\ref{RelationPositionSlitD2});\qquad\qquad \ell_+^{\eps}=\pi m_+/\om-\eps(\pi^{-1}|\ln\eps|+C_{\Xi}+\Re e\,\Gamma_+).
\end{array}
\end{equation}
Then when $\eps$ tends to zero, the reflection and transmission matrices $R^\eps$, $T^{\eps}$ in (\ref{DefScaMat}) are such that

\begin{equation}\label{ConditionModalConverterMain}
 R^\eps  =\left(\begin{array}{cc}
0 & 0 \\[1pt]
0 & 0 
\end{array}\right)+o(1),\qquad\qquad T^\eps  =\left(\begin{array}{cc}
0 & 1 \\[1pt]
1 & 0 
\end{array}\right)+o(1).
\end{equation}
Here $m_-\in\N$, $m_+\in\N^{\ast}$ and $C_{\Xi}$, $\Gamma_-$, $\Gamma_+$ are respectively introduced in (\ref{PolyGrowth}), (\ref{DefGammaM}), (\ref{DefGammaP}).
\end{proposition}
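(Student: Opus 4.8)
The plan is to assemble the final result directly from the pieces already established, since the heavy lifting has been done in the asymptotic analyses of Sections \ref{SectionAsymptoNeumann} and \ref{SectionAsymptoDirichlet}. The key observation is that Lemma \ref{lemmaDecomposition} reduces the problem entirely: by (\ref{RelationSymmetryIdentity}), the matrices $R^\eps$ and $T^\eps$ are simply the half-sum and half-difference of $R^\eps_N$ and $R^\eps_D$. Therefore it suffices to verify that, under the tuning (\ref{ParamTuned}), the two half-waveguide scattering matrices converge to the target values stated in (\ref{RelationSymmetry}), namely $R^\eps_N \to \left(\begin{smallmatrix} 0 & 1 \\ 1 & 0 \end{smallmatrix}\right)$ and $R^\eps_D \to \left(\begin{smallmatrix} 0 & -1 \\ -1 & 0 \end{smallmatrix}\right)$.

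First I would treat the Neumann matrix. The tuning of $\ell_-^\eps$ in (\ref{ParamTuned}) is exactly the condition $\eta_-=0$ used to derive the boxed expansions (\ref{AsymptoFinalResults1}) and (\ref{AsymptoFinalResults2}); consequently the four entries $r_{ij}^{N\eps}$ have the explicit leading terms recorded there, written in terms of the single quantity $\cos(\pi y_-)^2/(\beta_2/\beta_1)$. Imposing that $y_-$ solve (\ref{RelationPositionSlitN1}) forces $2\beta_1\cos(\pi y_-)^2/\beta_2 = 1$, which makes the diagonal entries $r_{11}^{N0}$ and $r_{22}^{N0}$ vanish and collapses the off-diagonal entries to $\mp\,\mrm{sign}(\cos(\pi y_-))$. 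The additional sign normalization (\ref{RelationPositionSlitN2}) then selects the solution of (\ref{RelationPositionSlitN1}) for which these off-diagonal terms equal $+1$, giving the first half of (\ref{RelationSymmetry}). The Dirichlet matrix is handled identically using (\ref{AsymptoFinalResults1D}), (\ref{AsymptoFinalResults2D}) with $\gamma_+$ in place of $\gamma_-$ and $y_+$ solving (\ref{RelationPositionSlitD1})--(\ref{RelationPositionSlitD2}); here the sign convention (\ref{RelationPositionSlitD2}) is chosen oppositely so that the off-diagonal entries converge to $-1$, producing the second half of (\ref{RelationSymmetry}).

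Finally I would substitute these two limits into (\ref{RelationSymmetryIdentity}). The half-sum of $\left(\begin{smallmatrix} 0 & 1 \\ 1 & 0 \end{smallmatrix}\right)$ and $\left(\begin{smallmatrix} 0 & -1 \\ -1 & 0 \end{smallmatrix}\right)$ is the zero matrix, so $R^\eps = o(1)$; their half-difference is $\left(\begin{smallmatrix} 0 & 1 \\ 1 & 0 \end{smallmatrix}\right)$, so $T^\eps = \left(\begin{smallmatrix} 0 & 1 \\ 1 & 0 \end{smallmatrix}\right) + o(1)$, which is precisely (\ref{ConditionModalConverterMain}).

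The only genuinely delicate point is not in this final assembly but upstream, in justifying that the remainders in (\ref{AsymptoFinalResults1})--(\ref{AsymptoFinalResults2D}) are truly $o(1)$ uniformly, so that the two half-problems decouple at leading order and the influence of the non-resonant ligament indeed pushes to the next order in $\eps$. I expect the main obstacle, already absorbed into the boxed asymptotics, to be the verification that choosing $\ell'_\pm$ to cancel the logarithmically divergent term $\pi^{-1}|\ln\eps|$ yields a well-posed tuning with $\eta_\pm=0$ and keeps the resonant amplitude $a$ finite; granting the earlier results, the proposition itself follows by the purely algebraic combination described above, together with the energy relations (\ref{RelConsNRJDemi}) which confirm that the limiting matrices are unitary and hence that no energy is lost in the limit.
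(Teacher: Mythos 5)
Your proposal is correct and follows essentially the same route as the paper: the tuning of $\ell_\pm^{\eps}$ in (\ref{ParamTuned}) reproduces the regimes (\ref{DefRegLong}), (\ref{DefRegLongD}) so that the boxed expansions (\ref{AsymptoFinalResults1})--(\ref{AsymptoFinalResults2D}) apply, the choices (\ref{RelationPositionSlitN1})--(\ref{RelationPositionSlitN2}) and (\ref{RelationPositionSlitD1})--(\ref{RelationPositionSlitD2}) force $R^{\eps}_N$, $R^{\eps}_D$ to the targets (\ref{RelationSymmetry}), and Lemma \ref{lemmaDecomposition} yields (\ref{ConditionModalConverterMain}) by the half-sum/half-difference computation. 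The only nitpick is notational: under (\ref{RelationPositionSlitN1}) both off-diagonal entries of $R^{\eps}_N$ collapse to the same value $-\mrm{sign}(\cos(\pi y_-))$, not to values of opposite signs as your ``$\mp$'' suggests, but your subsequent use of the sign conditions is consistent with this.
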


\section{Numerics}\label{SectionNumerics}

In this section, we illustrate the results that we have obtained above. 
We take $\om=3\pi/2\in(\pi;2\pi)$. According to the case, we compute numerically the scattering solutions $u^{\eps}_{1}$, $u^{\eps}_{2}$, $u^{\eps}_{N1}$, $u^{\eps}_{N2}$, $u^{\eps}_{D1}$, $u^{\eps}_{D2}$ defined respectively in (\ref{Field1}), (\ref{Field2}), (\ref{FieldN1}), (\ref{FieldN2}), (\ref{FieldD1}), (\ref{FieldD2}). To proceed, we use a $\mrm{P2}$ finite element method in domains obtained by truncating either $\Om^{\eps}$ or $\om^{\eps}$. On the artificial boundary created by the truncation, a Dirichlet-to-Neumann operator with 15 terms serves as a transparent condition. For $\om=3\pi/2$, the critical lengths (\ref{ConditionResonance}) are given by
\begin{equation}\label{InterestingValuesLiga}
\ell_-=\cfrac{\pi}{\om}\,(m_-+1/2)=\cfrac{2}{3}\,(m_-+1/2)\qquad\mbox{ and }\qquad\ell_+=\cfrac{\pi}{\om}\, m_+=\cfrac{2}{3}\,m_+.
\end{equation}
Below we pick $m_-=1$, $m_+=2$ so that $\ell_-=1$, $\ell_+=4/3$. On the other hand, the parameters $y_{\pm}$ for the ordinates of the starting points of the ligaments are set by solving (\ref{RelationPositionSlitN1})--(\ref{RelationPositionSlitD2}) as indicated in Proposition \ref{MainPropo}. Once we have computed the $u^{\eps}_{i}$, $u^{\eps}_{Ni}$, $u^{\eps}_{Di}$, it is easy to obtain the scattering coefficients in the representations (\ref{Field1})--(\ref{Field2}), (\ref{FieldN1})--(\ref{FieldD2}). For example, for $R>1/2$, we have
\[
\begin{array}{ll}
r^{\eps N}_{11}=\dsp\int_{0}^1(u^{\eps}_{N1}(-R,y)-\mrm{w}^+_1(-R+1/2))\mrm{w}^+_1(-R+1/2)\,dy,\\[10pt]
r^{\eps N}_{12}=\dsp\int_{0}^1(u^{\eps}_{N1}(-R,y)-\mrm{w}^+_1(-R+1/2))\mrm{w}^+_2(-R+1/2)\,dy.
\end{array}
\]

\begin{figure}[!ht]
\centering
\includegraphics[width=0.8\textwidth]{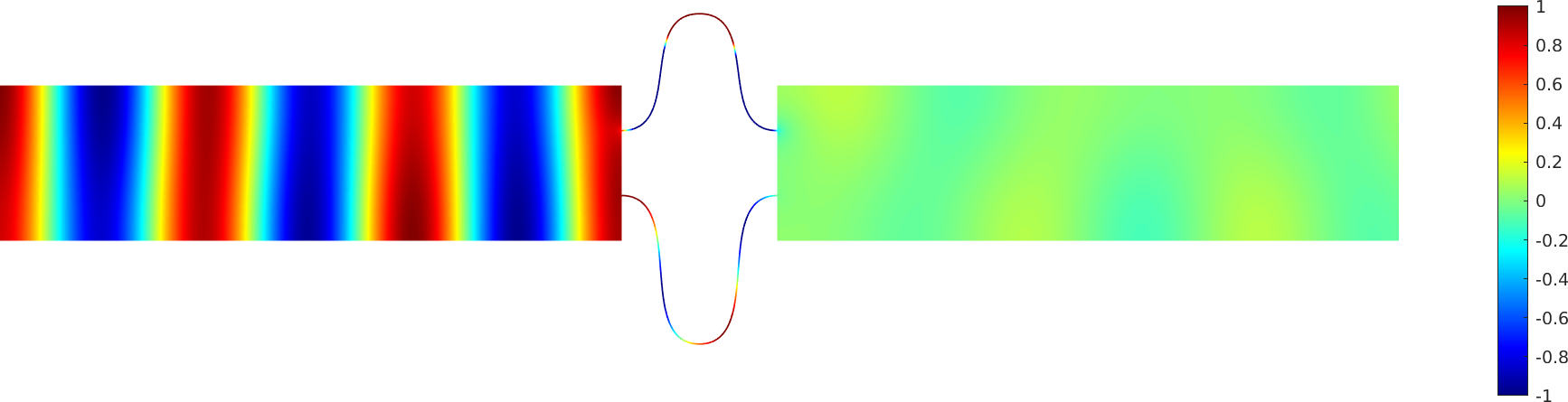}\\[10pt]
\includegraphics[width=0.8\textwidth]{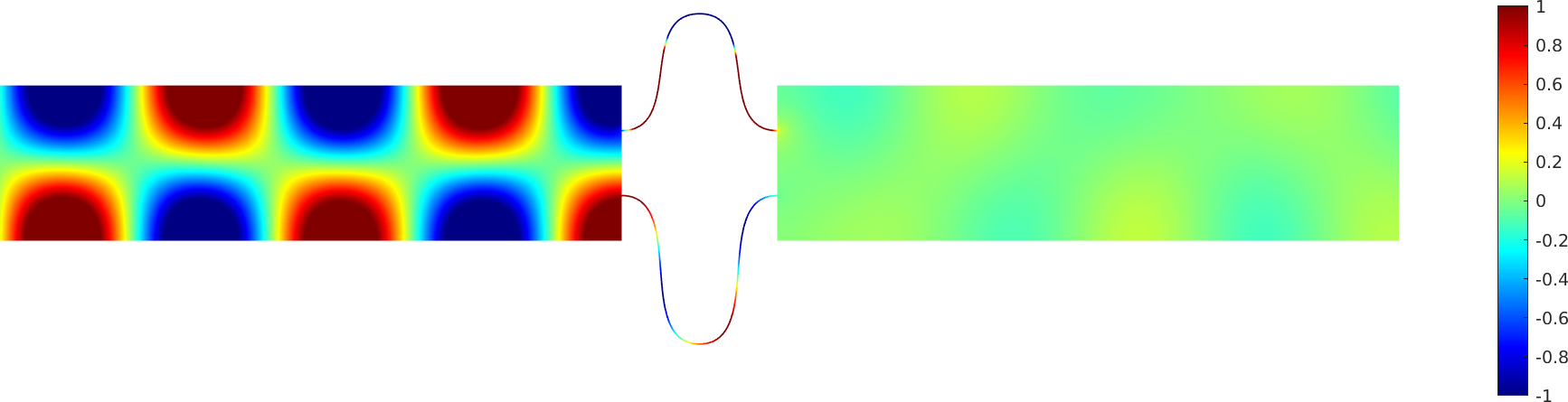}
\caption{Real parts of $u^\eps_1$ (top) and $u^\eps_2$ (bottom) for $\eps=0.01$. Here the lengths of the ligaments are close to the critical values (\ref{InterestingValuesLiga}) but not particularly selected to get mode conversion.}
\label{FieldNotTuned}
\end{figure}

In Figure \ref{FieldNotTuned}, we display the real parts of the fields $u^{\eps}_1$, $u^{\eps}_2$ in a geometry with some ligaments whose lengths are close to the critical values (\ref{InterestingValuesLiga}) but not particularly tuned. As expected, we observe that almost all the energy is backscatted and there is no mode conversion. More precisely, we find scattering matrices such that
\[
 R^\eps  \approx\left(\begin{array}{cc}
0.98-0.09i & 0.02+0.1i \\[1pt]
0.02+0.1i & 0.98-0.09i 
\end{array}\right),\qquad\qquad T^\eps \approx\left(\begin{array}{cc}
-0.02-0.12i & 0.02+0.07i \\[1pt]
0.02+0.07i & -0.02-0.12i 
\end{array}\right).
\]
This is coherent with (\ref{DefExpanNonCri}).~\\[-10pt]

\begin{figure}[!ht]
\centering
\includegraphics[width=0.78\textwidth]{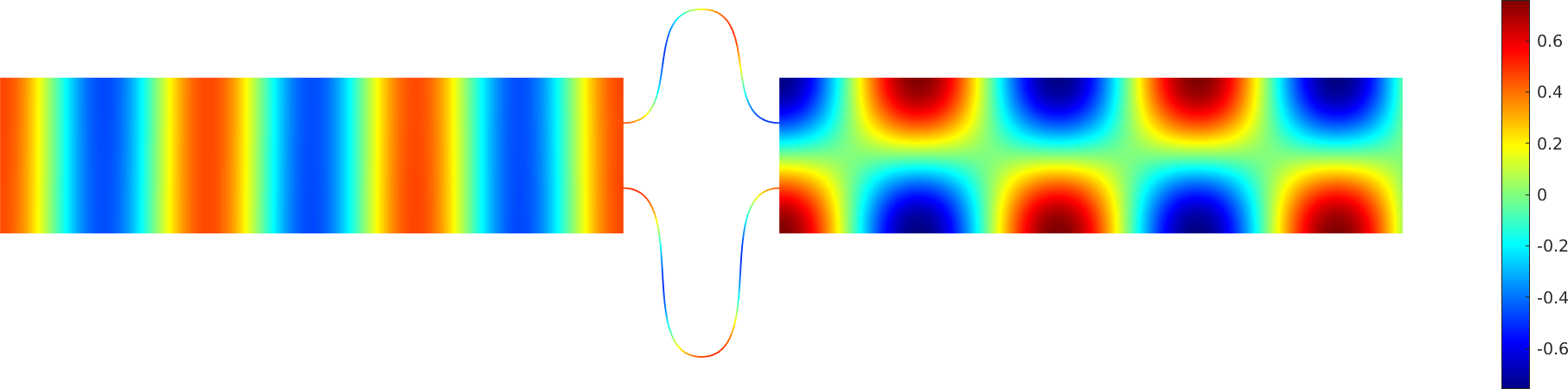}\\[10pt]
\includegraphics[width=0.78\textwidth]{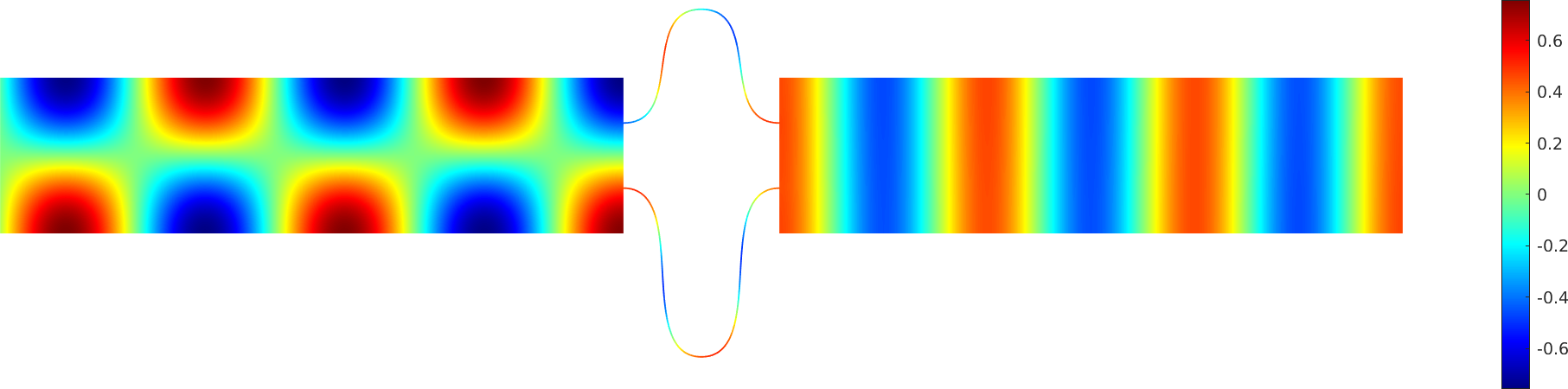}
\caption{Real parts of $u^\eps_1$ (top) and $u^\eps_2$ (bottom) for $\eps=0.01$. The lengths of the ligaments have been tuned to get mode conversion.}
\label{FieldFin}
\end{figure}

\begin{figure}[!ht]
\centering
\includegraphics[width=0.78\textwidth]{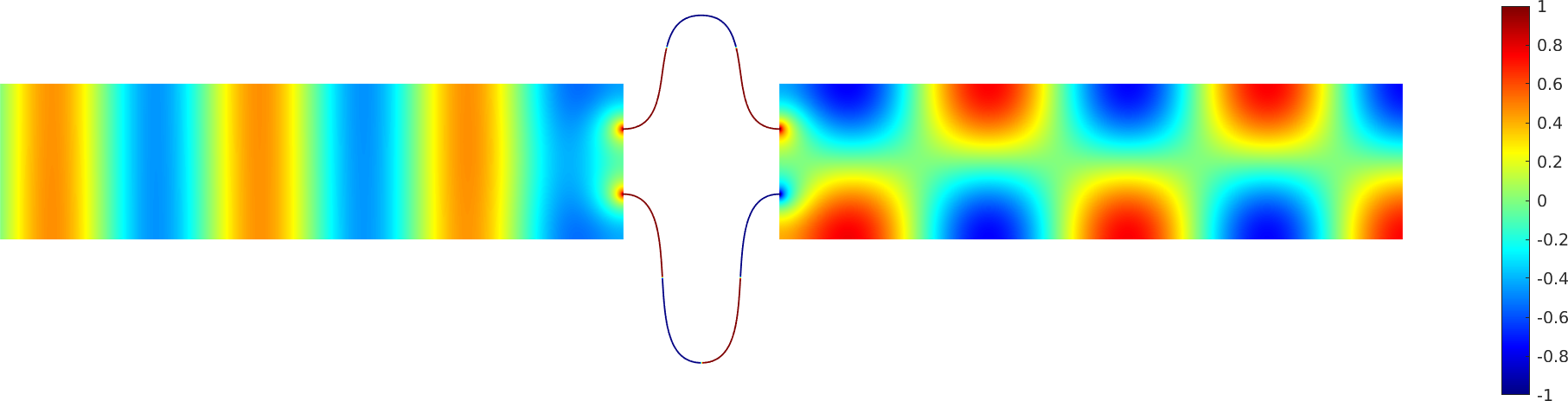}\\[10pt]
\includegraphics[width=0.78\textwidth]{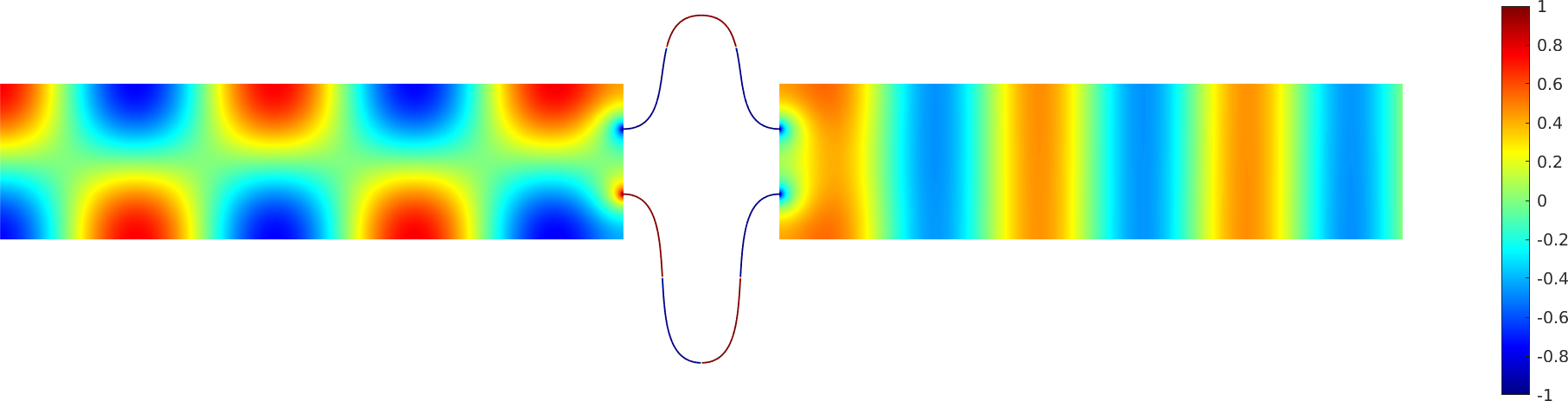}
\caption{Imaginary parts of $u^\eps_1$ (top) and $u^\eps_2$ (bottom) for $\eps=0.01$. The lengths of the ligaments have been tuned to get mode conversion.}
\label{FieldFinImag}
\end{figure}

\begin{figure}[!ht]
\centering
\includegraphics[width=0.38\textwidth]{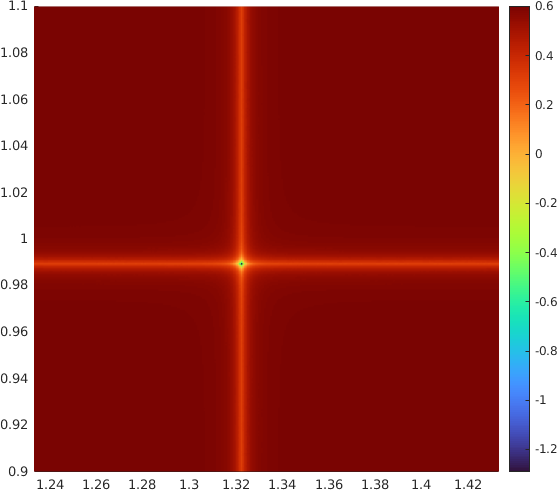}
\caption{Cost function $J(\ell^{\eps}_-,\ell^{\eps}_+)$  defined in (\ref{DefCostFunction}) for $\ell^{\eps}_-$, $\ell^{\eps}_+$ varying around the critical values (\ref{InterestingValuesLiga}). Here $\eps=0.01$.}
\label{CostFin}
\end{figure}

In Figures \ref{FieldFin}, \ref{FieldFinImag}, we display the fields $u^{\eps}_1$, $u^{\eps}_2$ in a geometry with some ligaments whose lengths have been carefully tuned to get mode conversion. We observe clearly the desired phenomenon. Numerically, we find
\[
R^\eps=\left(\begin{array}{cc}
0.3-0.4i & 0-8i \\[1pt]
0-8i & 0.2-0.4i
\end{array}\right)\,10^{-3},\qquad\qquad  T^\eps=\left(\begin{array}{cc}
0 & 1-10^{-3}i \\[1pt]
1-10^{-3}i &  0
\end{array}\right).
\]
This is coherent with the theory which predicts mode conversion up to an error in $O(\eps)$. On the other hand, formula (\ref{AsymptoFinalResults1}), (\ref{AsymptoFinalResults2}), (\ref{AsymptoFinalResults1D}), (\ref{AsymptoFinalResults2D}) indicate that in the ligaments, the real parts of the fields $u^\eps_{1}$, $u^\eps_{2}$ should be in $O(1)$ whereas the imaginary parts should be in $O(\eps^{-1})$ (note that each of the $a$ in (\ref{AsymptoFinalResults1}), (\ref{AsymptoFinalResults2}), (\ref{AsymptoFinalResults1D}), (\ref{AsymptoFinalResults2D}) are purely imaginary). Numerically this is indeed what we observe with an amplitude of the real part around $0.75$ while it goes to around $23$ for the imaginary part. In Figure \ref{CostFin}, we present the plot of the cost function $J$ such that
\begin{equation}\label{DefCostFunction}
J(\ell^{\eps}_-,\ell^{\eps}_+)= \ln\left(\,|R^\eps_N(\ell^{\eps}_-,\ell^{\eps}_+)-R^{\dagger}_N|+|R^\eps_D(\ell^{\eps}_-,\ell^{\eps}_+)-R^{\dagger}_D|\,\right)
\end{equation}
where $\ell^{\eps}_-$, $\ell^{\eps}_+$ vary around the values (\ref{InterestingValuesLiga}). In this definition, we take
\[
 R^{\dagger}_N:=\left(\begin{array}{cc}
1 & 0 \\[1pt]
0 & 1 
\end{array}\right),\qquad\qquad  R^{\dagger}_D:=\left(\begin{array}{cc}
-1 & 0 \\[1pt]
0 & -1 
\end{array}\right).
\]
We obtain a peak for some values $(\ell^{\eps}_-,\ell^{\eps}_+)=(\ell^{\star}_-,\ell^{\star}_+)$. We notice that there holds $\ell^{\star}_-<\ell_-=1$, $\ell^{\star}_+<\ell_+=4/3$. This is in agreement with the formula (\ref{ParamTuned}) of Proposition \ref{MainPropo}.

\begin{figure}[!ht]
\centering
\includegraphics[width=0.78\textwidth]{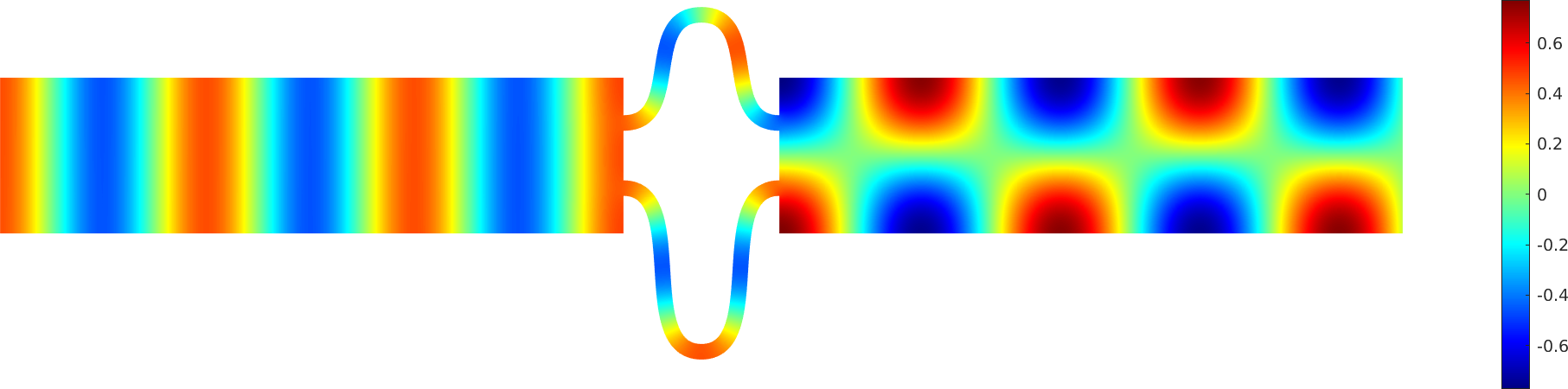}\\[10pt]
\includegraphics[width=0.78\textwidth]{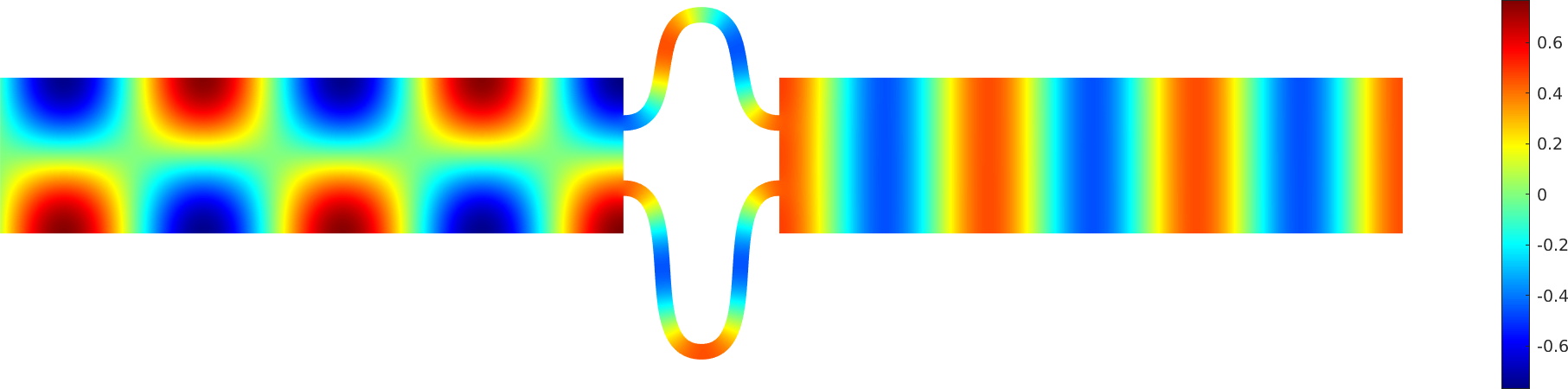}
\caption{Real parts of $u^\eps_1$ (top) and $u^\eps_2$ (bottom) for $\eps=0.1$. The lengths of the ligaments have been tuned to get mode conversion.}
\label{FieldGros}
\end{figure}

\begin{figure}[!ht]
\centering
\includegraphics[width=0.78\textwidth]{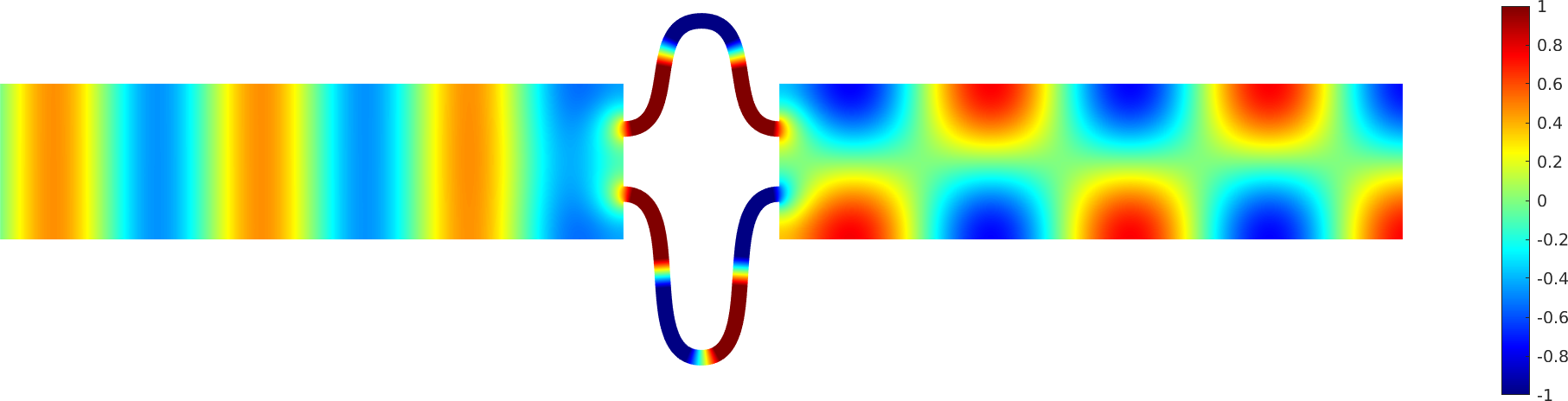}\\[10pt]
\includegraphics[width=0.78\textwidth]{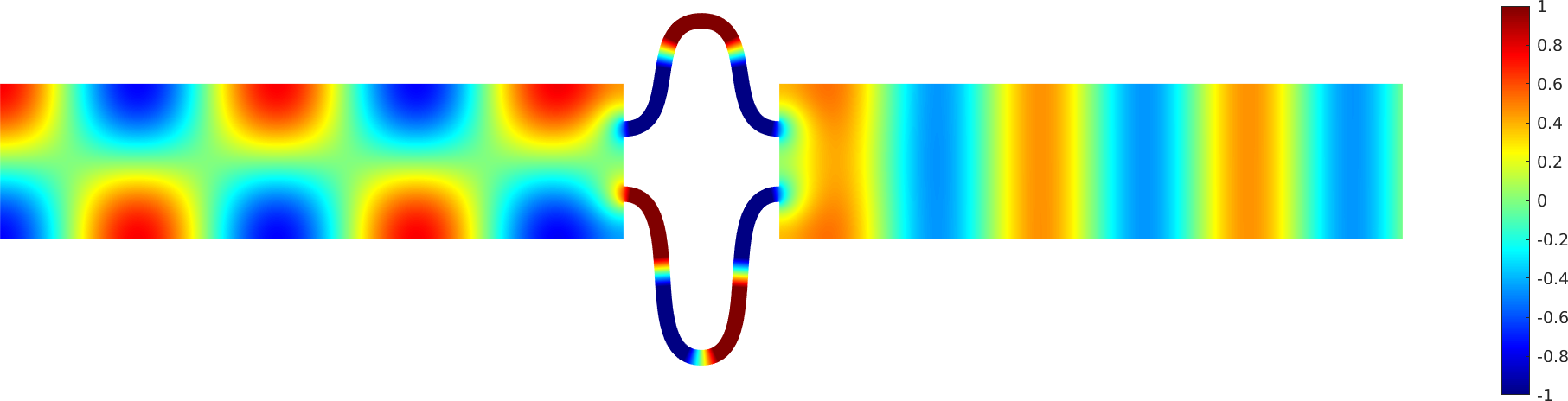}
\caption{Imaginary parts of $u^\eps_1$ (top) and $u^\eps_2$ (bottom) for $\eps=0.1$. The lengths of the ligaments have been tuned to get mode conversion.}
\label{FieldGrosImag}
\end{figure}

\begin{figure}[!ht]
\centering
\includegraphics[width=0.38\textwidth]{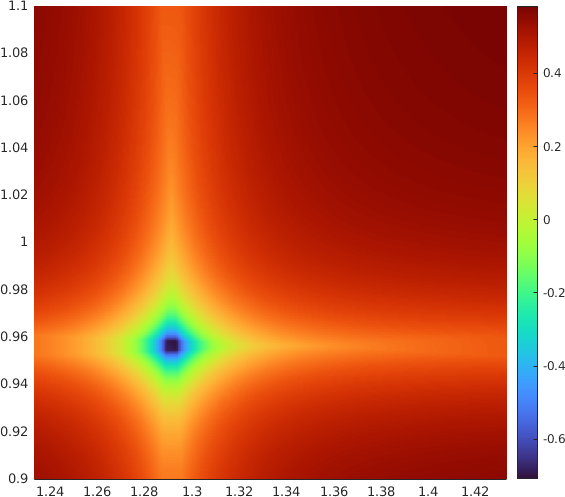}
\caption{Cost function $J(\ell^{\eps}_-,\ell^{\eps}_+)$  defined in (\ref{DefCostFunction}) for $\ell^{\eps}_-$, $\ell^{\eps}_+$ varying around  the critical values (\ref{InterestingValuesLiga}). Here $\eps=0.1$.}
\label{CostGros}
\end{figure}

In Figures \ref{FieldGros}--\ref{CostGros}, we display the same quantities as in Figures \ref{FieldFin}--\ref{CostFin} but with $\eps=0.1$ instead of $\eps=0.01$. Numerically for $(\ell^{\eps}_-,\ell^{\eps}_+)=(\ell^{\star}_-,\ell^{\star}_+)$, we find
\[
R^\eps=\left(\begin{array}{cc}
-3-4.7i & 0-0.1i \\[1pt]
0-0.1i & -0.1-4.7i
\end{array}\right)\,10^{-2},\qquad  T^\eps=\left(\begin{array}{cc}
(-0.1+1.4i)\,10^{-3} & 0.997-0.05i \\[1pt]
0.997-0.05i  &  (-0.5+1.4i)\,10^{-3}
\end{array}\right).
\]
Interestingly, we note that the mode conversion phenomenon still appear reasonably with ligaments which are not that thin. Of course the smaller $\eps$ is, the better the transmission and the conversion are. But by  comparing Figures \ref{CostFin} and \ref{CostGros}, we remark that the variation of the scattering coefficients gets even quicker as $\eps$ is small. This is a fact which can be inferred from the asymptotic analysis. It indicates that the mode conversion is more robust to perturbations of the setting for rather thick ligaments. In other words, when $\eps$ is very small, the lengths of the ligaments have to be tuned very precisely to observe the mode conversion.

\newpage

\section{Concluding remarks}\label{sectionConclu}

\begin{figure}[!ht]
\centering
\includegraphics[width=0.8\textwidth]{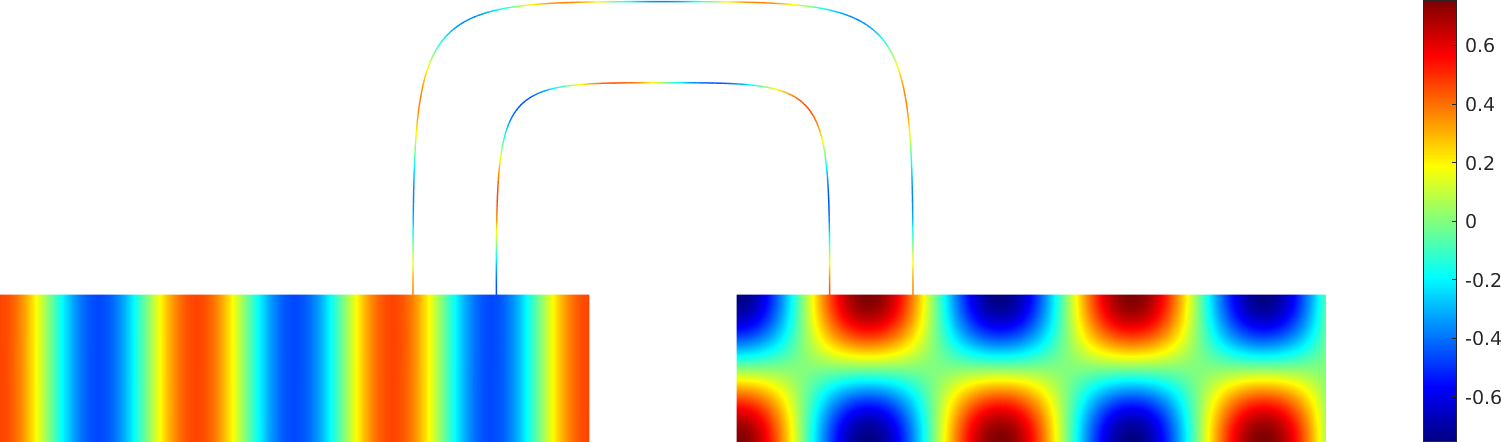}\\[10pt]
\includegraphics[width=0.8\textwidth]{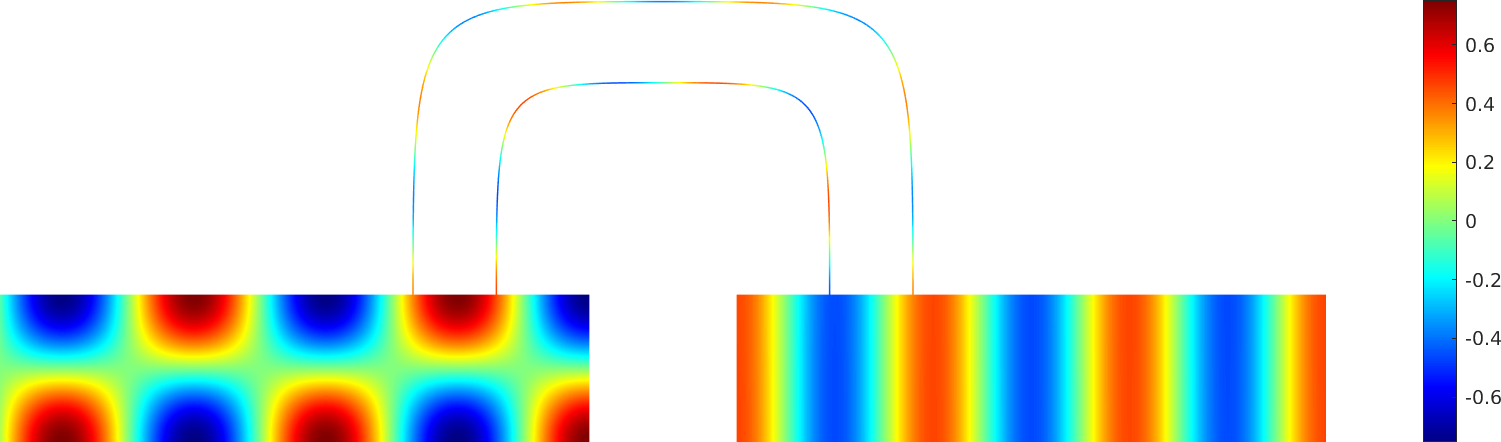}
\caption{Real parts of $u^\eps_1$ (top) and $u^\eps_2$ (bottom). The lengths of the ligaments have been tuned to get mode conversion.}
\label{FieldVertical}
\end{figure}

\noindent \textit{i)} We can place the thin ligaments on other parts of the waveguide, for example as depicted in Figure \ref{FieldVertical}. In this case, the results of the asymptotic analysis are a bit different, in particular the abscissa of the starting point of the ligaments play a role, but the method is completely similar. Note that this offers more degrees of freedom which can be useful if one wishes to work at higher wave number with more than two propagating modes. Let us underline that in this configuration, several targets can be desired for the transmission matrix.\\

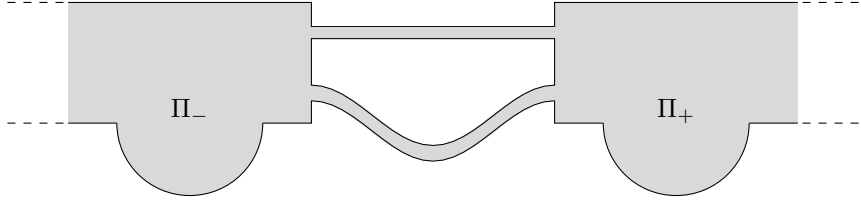
\begin{figure}[!ht]
\centering
\begin{tikzpicture}[scale=1.6]
\draw[fill=gray!30](-0.2,0.7) rectangle (2,0.8);
\draw[fill=gray!30,draw=none](-2,0) rectangle (0,1);
\draw[fill=gray!30,draw=none](2,0) rectangle (4,1);
\draw (-2,0)--(0,0)--(0,1)--(-2,1);
\draw (4,0)--(2,0)--(2,1)--(4,1);
\draw[fill=gray!30,draw=none] (-1,0) circle (0.6) ;
\draw[fill=gray!30,draw=none] (3,0) circle (0.6) ;
\draw (-1.6,0) arc (180:360:0.6);
\draw (2.4,0) arc (180:360:0.6);
\draw[dashed] (-2.5,0)--(-2,0);
\draw[dashed] (-2.5,1)--(-2,1);
\draw[dashed] (4.5,0)--(4,0);
\draw[dashed] (4.5,1)--(4,1);
\draw[fill=gray!30,draw=none](-0.2,0.705) rectangle (2.1,0.795);
\draw[domain=-0.1:2.1,line width=2mm,gray!30]plot(\x, { 0.25*cos(\x*180)}  );
\draw[domain=0:2]plot(\x, { 0.065+0.25*cos(\x*180)}  );
\draw[domain=0:2]plot(\x, { -0.065+0.25*cos(\x*180)}  );
\node at (-1,0.1){\small $\Pi_-$};
\node at (3,0.1){\small $\Pi_+$};
\end{tikzpicture}
\caption{Geometry of a waveguide with symmetric bumps. \label{Bumps}} 
\end{figure}

\noindent \textit{ii)} We considered a quite academic geometry with two straight channels. One may wonder if this assumption could be relaxed and if for example one could work in domains as illustrated in Figure \ref{Bumps} with symmetric bumps. The answer is no in general. The decomposition with the Neumann and Dirichlet problems would be the same and the asymptotic procedure would be very similar. This is an interest of the method proposed here compared for example to the technique of \cite{DeGr18,DeGr20} based on decomposition in Fourier series or the one of \cite{LiZh18,LiSZ19} relying on integral equations with an explicit kernel, we do not need separation of variables in $\Pi_{\pm}$. But as already announced in Remarks \ref{Remark1}, \ref{Remark2}, for a generic domain, we can not position the ligaments to get (\ref{RelationSymmetry}). In such a situation, it is necessary to work with a higher number of ligaments. But then coupling effects between the ligaments appear and they are not so simple to deal with.\\
\newline
\textit{iii)} On the other hand, we worked in 2D but what we proposed could be adapted in 3D. However the asymptotic procedure would be different. This comes in particular from the fact that in 3D, the $Y^1$ appearing in (\ref{PolyGrowth}) would have a different expansion at infinity (see more details in \cite{NaCh21a}). \\
\newline
\textit{iv)} The trick consisting in working a geometry which is symmetric with respect to the vertical axis is quite important in the analysis. It allows us to uncouple the influence of the two ligaments. Without it, we can still proceed to the asymptotic analysis in a similar way. But the results we obtain are less simple to exploit to get the mode conversion  because then coupling constants depending non explicitly on the positions of the ligaments appear in the expansions. \\
\newline
\textit{v)} What we proposed here is very specific to the Neumann BC and cannot be adapted for the Dirichlet BC (quantum waveguides). The reason is that with Dirichlet BC almost nothing passes through the thin ligaments. Another idea has to be found to design a mode converter for a waveguide with Dirichlet BC.

\section*{Appendix: auxiliary results}\label{Appendix}

\begin{lemma}\label{LemmaCReal}
The constant $C_{\Xi}$ appearing in the decomposition (\ref{PolyGrowth}) of the function $Y^1$ is real.
\end{lemma}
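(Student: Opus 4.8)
The plan is to exploit the fact that the boundary value problem defining $Y^1$, together with the normalisation encoded in the expansion (\ref{PolyGrowth}), is invariant under complex conjugation; uniqueness then forces $Y^1$ to coincide with its own conjugate, whence $C_\Xi$ is real.

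First I would record precisely what characterises $Y^1$: it is the unique solution of the Neumann problem (\ref{PbBoundaryLayer}) whose asymptotics in the semi-strip $\Xi^+$ reads $\xi_x+C_\Xi+O(e^{-\pi\xi_x})$ and whose asymptotics in the half-plane $\Xi^-$ reads $\frac{1}{\pi}\ln\frac{1}{|\xi|}+O(1/|\xi|)$. The crucial point of this normalisation is that the half-plane expansion carries no additive constant term; this is exactly what removes the freedom of adding a multiple of the constant solution $Y^0=1$ and makes the definition unambiguous (see \cite[Lemma 4.1]{BoCN18}).

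Next I would consider $\overline{Y^1}$, the pointwise complex conjugate. Since the Laplace operator and the homogeneous condition $\partial_\nu Y=0$ have real coefficients, $\overline{Y^1}$ is again a solution of (\ref{PbBoundaryLayer}). Conjugating the two lines of (\ref{PolyGrowth}) and using that $\xi_x$, $e^{-\pi\xi_x}$, $\frac{1}{\pi}\ln\frac{1}{|\xi|}$ and the remainder orders are real, I find that $\overline{Y^1}$ behaves like $\xi_x+\overline{C_\Xi}+O(e^{-\pi\xi_x})$ in $\Xi^+$ and like $\frac{1}{\pi}\ln\frac{1}{|\xi|}+O(1/|\xi|)$ in $\Xi^-$. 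In particular $\overline{Y^1}$ satisfies the very same representation (\ref{PolyGrowth}), with identical leading terms and again no constant term in the half-plane, the only change being $C_\Xi$ replaced by $\overline{C_\Xi}$.

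Finally, the uniqueness of the function characterised by (\ref{PbBoundaryLayer})--(\ref{PolyGrowth}) yields $\overline{Y^1}=Y^1$, so $Y^1$ is real-valued and consequently $C_\Xi=\overline{C_\Xi}\in\R$. There is essentially no analytic obstacle here: the argument is purely a symmetry-plus-uniqueness observation, and an alternative route through a Green's identity on a truncated domain $\Xi\cap\{|\xi|<R\}$ (computing $\Im m\,C_\Xi$ from the boundary terms as $R\to+\infty$) would work equally well but is less direct. The only point deserving care is to check that the normalisation preserved under conjugation is genuinely the one pinning down $Y^1$ --- namely the absence of an $O(1)$ term in the half-plane expansion --- since this is precisely what kills the additive-constant ambiguity and lets one conclude $\overline{Y^1}=Y^1$ rather than merely $\overline{Y^1}=Y^1+\text{const}$.
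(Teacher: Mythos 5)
Your argument is correct, but it is not the route the paper takes. You proceed by symmetry plus uniqueness: conjugation leaves the Neumann problem (\ref{PbBoundaryLayer}) invariant and preserves the normalisation encoded in (\ref{PolyGrowth}) (unit coefficient on $\xi_x$, coefficient $1/\pi$ on the logarithm and, crucially, no additive constant in the half-plane expansion), so the uniqueness quoted from \cite[Lemma 4.1]{BoCN18} forces $\overline{Y^1}=Y^1$ and hence $C_\Xi\in\R$. The paper instead uses exactly the Green's identity route that you mention only in passing as ``less direct'': it writes $0=\int_{\Xi_\rho}(Y^1-\overline{Y^1})\Delta Y^1-\Delta(Y^1-\overline{Y^1})Y^1\,d\xi_xd\xi_y$, integrates by parts over the truncated domain $\Xi_\rho$ and sends $\rho\to+\infty$; the Neumann condition removes the terms on $\partial\Xi$, the contribution of the large arc in $\Xi^-$ vanishes because the logarithmic parts of $Y^1$ and $\overline{Y^1}$ are identical (and real), and the cross-section $\{\xi_x=\rho\}$ of the strip produces precisely $C_\Xi-\overline{C_\Xi}=0$. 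Comparing the two: your proof is shorter, yields the strictly stronger conclusion that $Y^1$ itself is real-valued, and correctly isolates the one delicate point of the symmetry argument (were an $O(1)$ term allowed in the half-plane expansion, conjugation plus uniqueness would only give $\overline{Y^1}=Y^1+\mathrm{const}$); on the other hand, it leans on the full strength of the uniqueness statement, a Liouville-type result that the paper cites but does not prove. The paper's computation is self-contained --- it needs only the existence of a solution with the expansion (\ref{PolyGrowth}) and decay strong enough to kill the remote boundary terms --- and it is the same integration-by-parts scheme reused in Lemma \ref{lemmaRelConstants} to evaluate $\Im m\,(\om\Gamma_{\pm})$, which presumably explains the authors' choice.
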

\begin{proof}
Since there holds $\Delta Y^1=0$ in $\Xi$, for all $\rho>0$, we have 
\[
0=\int_{\Xi_\rho} (Y^1-\overline{Y^1})\Delta Y^1-\Delta(Y^1-\overline{Y^1})Y^1\,d\xi_xd\xi_y
\]
with $\Xi_\rho:=\{(\xi_x,\xi_y)\in\Xi,\,\xi_x<0\mbox{ and }|\xi|<\rho\}\cup \{(\xi_x,\xi_y)\in[0;\rho)\times(-1/2;1/2)\}$. Integrating by parts and taking the limit $\rho\to+\infty$, we get $C_\Xi-\overline{C_\Xi}=0$. This shows that $C_\Xi$ is real.
\end{proof}

\begin{lemma}\label{lemmaRelConstants}
The constants $\Gamma_{\pm}$ corresponding to the constant behaviour of $\gamma_{\pm}$ at $A_{\pm}$ (see (\ref{DefGammaM}), (\ref{DefGammaP})) are such that 
\[
\Im m\,(\om\Gamma_{\pm})=1+2\beta_1\cos(\pi y_{\pm})^2/\beta_2. 
\]
\end{lemma}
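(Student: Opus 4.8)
The plan is to relate the imaginary part of $\Gamma_\pm$ to the far-field behaviour of $\gamma_\pm$ through two integrations by parts of the same flavour as the one leading to (\ref{Part1System}). Since $\gamma_\pm$ is outgoing and solves the homogeneous Helmholtz equation in $\Pi_-$ away from $A_\pm$, it admits at infinity the modal decomposition
\[
\gamma_\pm(x,y)=s^\pm_1\,\mrm{w}^-_1(x+1/2,y)+s^\pm_2\,\mrm{w}^-_2(x+1/2,y)+O\big(e^{-(4\pi^2-\om^2)^{1/2}|x|}\big)
\]
for coefficients $s^\pm_1,s^\pm_2\in\Cplx$ to be determined. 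Both $\gamma_-$ and $\gamma_+$ solve a problem of exactly the same type in $\Pi_-$ (compare (\ref{DefGamma}) and (\ref{DefGammaDirichlet})), so it suffices to treat them simultaneously with the source at $A_\pm=(-1/2,y_\pm)$.

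First I would work on the truncated and punctured domain $\Pi_-^\rho:=\{(x,y)\in\Pi_-\,,\,x>-\rho\mbox{ and }r^{A_\pm}>1/\rho\}$ already used in the paper, applying Green's identity to $\gamma_\pm$ and $\overline{\gamma_\pm}$. Since $\Delta\gamma_\pm+\om^2\gamma_\pm=0$ there, the volume term vanishes and only boundary contributions remain. On the Neumann walls $\partial_\nu\gamma_\pm=0$; at the truncation $\{x=-\rho\}$, inserting the modal expansion and using the orthonormality of the transverse profiles yields $2i(|s^\pm_1|^2+|s^\pm_2|^2)$ in the limit $\rho\to+\infty$ (the evanescent and cross terms dropping out); and on the small half-circle $\{r^{A_\pm}=1/\rho\}$, the expansion (\ref{DefGammaM})--(\ref{DefGammaP}) gives, once the logarithmic terms cancel, the contribution $-2i\,\Im m\,\Gamma_\pm$ — the factor stemming from the fact that $A_\pm$ lies on $\partial\Pi_-$, so that the excised region is a half-disk subtending an angle $\pi$. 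Summing these to zero gives $\Im m\,\Gamma_\pm=|s^\pm_1|^2+|s^\pm_2|^2$.

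To identify the coefficients $s^\pm_j$, I would repeat the integration by parts leading to (\ref{Part1System}), now pairing $\gamma_\pm$ with $W_j=\mrm{w}^+_j+\mrm{w}^-_j$, which satisfies $\Delta W_j+\om^2W_j=0$ and $\partial_\nu W_j=0$ in $\Pi_-$. The volume term again vanishes; the far field produces $2is^\pm_j$, while the half-circle, thanks to the logarithmic singularity of $\gamma_\pm$, reproduces the value $W_j(A_\pm)$. Since these sum to zero, $s^\pm_j=-\tfrac{1}{2i}W_j(A_\pm)=\tfrac{i}{2}W_j(A_\pm)$. Evaluating at $x+1/2=0$ gives $W_1(A_\pm)=2/\sqrt{\beta_1}$ and $W_2(A_\pm)=2\sqrt 2\cos(\pi y_\pm)/\sqrt{\beta_2}$, whence $|s^\pm_1|^2=1/\beta_1$ and $|s^\pm_2|^2=2\cos(\pi y_\pm)^2/\beta_2$.

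Combining the two steps, $\Im m\,\Gamma_\pm=1/\beta_1+2\cos(\pi y_\pm)^2/\beta_2$; multiplying by $\om$ and using $\beta_1=\om$ yields the announced identity $\Im m\,(\om\Gamma_\pm)=1+2\beta_1\cos(\pi y_\pm)^2/\beta_2$. The computation is otherwise routine; the points demanding care are the correct bookkeeping of the outward normals on the three pieces of $\partial\Pi_-^\rho$, the half-disk (rather than full-disk) factor in the singular contribution near $A_\pm$, and the justification that the evanescent and cross terms at $x=-\rho$ vanish as $\rho\to+\infty$, which follows from the decay rate $e^{-(4\pi^2-\om^2)^{1/2}|x|}$ together with the orthonormality of the transverse modes.
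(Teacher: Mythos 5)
Your proof is correct and follows essentially the same route as the paper: the same two Green's identities on $\Pi_-^\rho$, pairing $\gamma_\pm$ with $W_j$ to obtain $s^\pm_j=\tfrac{i}{2}W_j(A_\pm)$ and pairing $\gamma_\pm$ with $\overline{\gamma_\pm}$ to obtain $\Im m\,\Gamma_\pm=|s^\pm_1|^2+|s^\pm_2|^2$, with only the order of the two steps swapped. The additional bookkeeping you spell out (half-disk contribution at $A_\pm$, orthogonality and decay at the truncation) is exactly what the paper's proof leaves implicit, so there is no gap.
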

\begin{proof}
Since the functions $\gamma_{\pm}$ are outgoing, we have the expansions
\[
\gamma_{\pm}(x,y)=s_{\pm1}\mrm{w}^-_1(x+1/2,y)+s_{\pm2}\mrm{w}^-_2(x+1/2,y)+\tilde{\gamma}_{\pm}(x,y)
\]
where $s_{\pm i}\in\Cplx$ and $\tilde{\gamma}_{\pm}$ are exponentially decaying at infinity. For $i=1,2$, integrating by parts in
\[
0=\int_{\Pi_-^\rho}(\Delta \gamma_{\pm} +\omega^2\gamma_{\pm})W_i-\gamma_{\pm}\,(\Delta W_i +\omega^2W_i)\,dz,
\]
and taking the limit $\rho\to+\infty$, we obtain 
\begin{equation}\label{relationCoefSca}
s_{\pm1}=i/\sqrt{\beta_1}\qquad\mbox{ and }\qquad s_{\pm2}=i\cos(\pi y_{\pm})\sqrt{2}/\sqrt{\beta_2}.
\end{equation}
On the other hand, integrating by parts in
\[
0=\int_{\Pi_-^\rho}(\Delta \gamma_{\pm} +\omega^2\gamma_{\pm})\overline{\gamma_{\pm}}-\gamma_{\pm}\,(\Delta \overline{\gamma_{\pm}} +\omega^2\overline{\gamma_{\pm}})\,dz,
\]
and taking again the limit $\rho\to+\infty$, we obtain $2(|s_{\pm1}|^2+|s_{\pm2}|^2)-2\Im m\,\Gamma=0$. From (\ref{relationCoefSca}), this yields the desired result.
\end{proof}

\section*{Acknowledgments} 
The research of S.A. Nazarov was supported by the grant No. 17-11-01003 of the Russian Science Foundation.

\bibliography{Bibli}
\bibliographystyle{plain}
\end{document}